\renewcommand{\theenumi}{\roman{enumi}}
\newcommand{\comment}[1]{}
\setlist[enumerate]{label={\rm(\theenumi)}}
\numberwithin{equation}{section}
\theoremstyle{plain}
\newtheorem{theorem}{Theorem}[section]
\newtheorem{proposition}[theorem]{Proposition}
\newtheorem{lemma}[theorem]{Lemma}
\newtheorem{assumption}[theorem]{Assumption}
\theoremstyle{definition}
\newtheorem{definition}[theorem]{Definition}
\newtheorem{remark}[theorem]{Remark}
\newtheoremstyle{key}{3pt}{3pt}{}{}{\itshape}{:}{.5em}{}
\theoremstyle{key}
\newtheorem*{JEL}{JEL subject classification}
\newtheorem*{MSC}{MSC2010 subject classification}
\newtheorem*{keywords}{Keywords}
\DeclareMathOperator*{\esssup}{ess\,sup}
\renewcommand{\mid}{\:\vert\:}
\providecommand{\abs}[1]{\left\lvert#1\right\rvert}
\providecommand{\norm}[1]{\lVert#1\rVert}
\providecommand{\ceil}[1]{\left\lceil#1\right\rceil}
\newcommand{\nbd}[1]{\(#1\)\nobreakdash-\hspace{0pt}}
\newcommand{\indi}[1]{\mathbf{1}_{#1}}
\newcommand{\N}{\mathbb{N}}
\newcommand{\R}{\mathbb{R}}
\newcommand{\filt}[1]{\mathbf{#1}} 
\newcommand{\B}{{\mathscr B}}
\newcommand{\F}{{\mathscr F}}
\renewcommand{\S}{{\mathscr S}}
\newcommand{\T}{{\mathscr T}}
\newcommand{\cM}{{\cal M}}
\newcommand{\cP}{{\cal P}}
\begin{document}
\title{\textbf{Subgame-Perfect Equilibria in Stochastic Timing Games}\footnote{We would like to thank Jacco Thijssen for many valuable discussions. Financial support by the German Research Foundation (DFG) via grant Ri 1128-4-1, \textsl{Singular Control Games: Strategic Issues in Real Options and Dynamic Oligopoly under Knightian Uncertainty}, is gratefully acknowledged.}}
\author{
Frank Riedel \thanks{Center for Mathematical Economics, Bielefeld University, Germany; email: \texttt{friedel@uni-bielefeld.de}}
\and Jan-Henrik Steg \thanks{Center for Mathematical Economics, Bielefeld University, Germany; email: \texttt{jsteg@uni-bielefeld.de}}
}
\date{September 16, 2014}
\maketitle

{\textbf{Abstract.}}
We introduce a notion of subgames for stochastic timing games and the related notion of subgame-perfect equilibrium in possibly mixed strategies. While a good notion of subgame-perfect equilibrium for continuous-time games is not available in general, we argue that our model is the appropriate version for timing games. We show that the notion coincides with the usual one for discrete-time games.  Many timing games in continuous time have only equilibria in mixed strategies~-- in particular preemption games, which often occur in the strategic real option literature. We provide a sound foundation for some workhorse equilibria of that literature, which has been lacking as we show. We obtain a general constructive existence result for subgame-perfect equilibria in preemption games and illustrate our findings by several explicit applications.
\begin{JEL}
C61, C73, D21, L12
\end{JEL}
\begin{MSC}
60G40, 91A25, 91A40, 91A55
\end{MSC}

\begin{keywords}
timing games, stochastic games, mixed strategies, subgame-perfect equilibrium in continuous time, optimal stopping
\end{keywords}

\section{Introduction}\label{sec:intro}

The purpose of this work is to provide a framework for subgame-perfect equilibria in stochastic timing games. Therefore we first introduce a reasonable notion of \emph{subgames} for stochastic timing games. In continuous time there is no general clear concept of a subgame and it is much debated whether any such thing can be meaningful at all, compared to extensive form games\footnote{
See, for instance, \cite{Alos-FerrerRitzberger08}, who define an infinite generalization of a classical tree and show that it is in general necessary that every node has a clearly defined successor to guarantee unique outcomes of the game.
}. 
We argue that indeed there is a quite natural~-- although probably not obvious~-- concept for timing games, since one can characterize decision nodes by \emph{stopping times}. It is intuitively quite clear what the relevant information of a ``history'' in a timing game is: at what time which player has stopped, yet, and what has been revealed about the uncertain state of the world. Hence, the information about actions is very elementary and only exogenous information may cause complexity. Indeed, in continuous time the events that can be associated with some ``past'' or history are in general so rich that it is not enough to consider fixed deterministic dates and their information sets. This is why the notion of stopping time is so important and we can use it to describe a generic situation in which to execute or revise a stopping decision. Consistency will be a particularly important issue here, of course.

Given this concept of subgames, we study subgame-perfect equilibria in \emph{mixed strategies} for stochastic timing games. In many timing games in continuous time there exist no equilibria in pure strategies, basically due to the fact that there is no ``next'' period, onto which one can put a threat, for instance. This fact tends to be overlooked in part of the more applied literature and we aim to provide a sound foundation for equilibria that have already become standard in the strategic real option literature (e.g., the papers listed below). The primary issue is conceptual: to consider the players' options properly. This then entails also some technical issues that one has to address. Another important reason to introduce mixed strategies is that pure strategy equilibria in symmetric games~-- if they exist~-- often involve asymmetric payoffs that just depend on the respective roles taken by the players. Determining the roles can be regarded as an additional strategic problem to solve before the timing game starts, which may be avoided by allowing mixed strategies.

Our basic concept of mixed strategies are distributions over time that may depend on the state of the world, but only through the available information about uncertainty: they will be adapted processes. Together with our requirement of time consistency we obtain objects that correspond to behaviour strategies in discrete-time models as far as the analogy can reach~-- since our decision nodes cannot be well ordered in continuous time and it is not possible to represent distributions over time by specifying conditional probabilities for each point in their support separately.

It is well known that coordination is very important in many timing games, in particular when there is a preemption incentive, like in typical strategic real option models. Often some ad hoc modification of the actual model is used~-- like tie breaking by coin tosses~-- to circumvent such issues and to obtain any equilibria.\footnote{
See, e.g., \cite{HoppeLehmann-Grube05}.
}
However, the risk of simultaneous stopping (resp.\ investment) is a priori one of the key aspects of such situations. The most notable approach to enable just sufficient coordination by extending strategy spaces has certainly been that of \cite{FudenbergTirole85}, which also aims to capture limits from discrete-time approximations of the continuous-time game. We generalize their concept to a stochastic framework, stressing the role of limit outcomes.\footnote{
\cite{Thijssenetal12} take a different route to adapt the approach of Fudenberg and Tirole to a stochastic setting. First, they use \emph{unconditional} strategies, which do not depend on whether the respective other player has already stopped. Second, they force strategies and outcome distributions to be the same by imposing a joint restriction on feasible pairs of strategies. The point of \cite{FudenbergTirole85} is exactly that outcome probabilities and strategies need not be the same if one argues with continuous-time limits from discrete time. Our strategies are conditional on the stopping history and we allow players to choose any strategy from their individually feasible set and then determine the resulting outcome distributions.
} 
To accomodate standard asymmetric or stochastic models we have to drop some regularity requirements that Fudenberg and Tirole use at exactly the most critical point, which has been a somewhat distracting feature. We propose a more fundamental solution based on limit outcomes, which however still requires a careful introduction.

In a stochastic setting optimality is much less obvious than in a deterministic one, since the game and the local incentives do not proceed quite as linearly. Therefore we stress the role of optimal stopping and develop complete formal proofs for the subgame-perfect equilibria we propose. Specifically, we provide some general characterization of preemptive equilibria and we illustrate its application by some typical models that allow explicit solutions.

Strategic timing problems appear in an abundance of contexts, and in particular in economics. Hence there is a vast literature on this classical topic and we only name a few works that are most related to ours for certain reasons. 

On the one hand there is the literature on mainly deterministic timing problems in continuous time that is inspired by a wide range of applications, such as preemption models in economics (e.g., \cite{FudenbergTirole85,HendricksWilson92}) or wars of attrition in biology or economics as well (e.g., \cite{Hendricksetal88}). These classes of models are quite stylized with a systematic first or second-mover advantage. Then, without uncertainty, the game proceeds indeed very linearly due to perfect foresight. Some complications arise when the incentives may vary. \cite{Larakietal05} consider general deterministic \nbd{N}player games with payoffs that are just continuous functions of time (for given identities of first-movers). They prove that there do always exist \nbd{\varepsilon}equilibria, but not necessarily exact equilibria.

On the other hand, as we emphasize uncertainty, the literature on Dynkin games with a large tradition in mathematics also has to be named here. Classically, however, these are two-person, \emph{zero-sum} timing games, and the classical question is the existence of an equilibrium (saddle) point, called value, under varying conditions. We here just refer to the more recent work by \cite{TouziVieille02}, since their payoff processes are very general and~-- more importantly~-- since they introduce another concept of mixed strategies (but without consideration of subgames). \cite{TouziVieille02} prove that many more Dynkin games have a value if one allows for such mixed strategies.

Quite recently the two strands began to merge by considering stochastic timing games with non-zero-sum payoffs. \cite{HamadeneZhang10}, for instance, prove existence of Nash equilibrium for 2-player games with a general second-mover advantage.\footnote{
See also \cite{HamadeneHassani14} for an extension to \(N\) players using a similar approach. \cite{LarakiSolan13} make less assumptions concerning the incentives in a 2-player game. Consequently, even allowing for mixed strategies, they can only prove existence of \nbd{\varepsilon}equilibria.
}.

The type of application we are mainly addressing is strategic investment under uncertainty. An early model that we will have a closer look at is the one of \cite{Weeds02} and similar ones of \cite{PawlinaKort06} or \cite{MasonWeeds10} and followers. We propose strategies that do support the equilibrium outcomes described in these papers.

We begin by defining the stochastic timing game and our notion of subgames and mixed strategies in Section \ref{sec:game}. In Section \ref{sec:extended} we derive equilibria in extended mixed strategies for preemption games, i.e., with local first-mover advantages. Our concepts and general results are illustrated by several applications in Section \ref{sec:exm}. An appendix collects the proofs and some technical results.

\section{The notion of subgame in continuous-time stopping games}\label{sec:game}

We consider a timing game between two players in continuous time  under uncertainty.

Let $\bigl(\,\Omega, \F, (\F_t)_{t \ge 0}, P\,\bigr)$ be a filtered probability space. Let six adapted, right-continuous processes $L^i,F^i,$ and $M^i$ for two players $i=1,2$  be given. They correspond to the  leader's, follower's, or simultaneous stopping payoff, resp., in the continuous-time stochastic stopping game that we are going to set up now. The index $i$ is added to allow for asymmetric payoffs.

A pure strategy is a stopping time for a player. Recall that a stopping time $\tau$ is a random time such that the event ``stop before time $t$'' is known at time $t$, or $\left\{\tau \le t \right\} \in \F_t \;\forall t\geq 0$.
Denote the set of all stopping times or pure strategies by $\T$.

Payoffs depend on the stopping actions of both players.  Whenever one of the players stops, the game ends.
If player \(i\in\{1,2\}\) is the sole first to stop at the stopping time \(\tau\), he obtains the (``leader's'') payoff \(L^i_\tau\). The opponent \(j\in\{1,2\}\setminus i\) gets  the (``follower's'')  payoff \(F^j_\tau\).\footnote{
$F^j_\tau$ may incorporate the value of the continuation problem of the remaining player \(j\) if there still is a payoff-relevant stopping decision to make. Then, if some player \(i\) has already stopped, a new situation arises for which separate strategies have to be formed that determine the eventual stopping of the follower \(j\). We concentrate only on histories in which no one has stopped, yet, as it is customary in the literature on stopping games.
} 
If both players stop simultaneously, each player \(i\) obtains \(M^i_\tau\). Suppose players \(i\) and \(j\) plan to stop at the stopping times \(\tau_i\) and \(\tau_j\), respectively. Then the game ends at the stopping time \(\tau=\tau_i\wedge\tau_j\) and the payoff to player \(i\) at time 0 is
\begin{equation}\label{Vi_pure}
\pi_i\left(\tau_i,\tau_j\right):=E\bigl[L^i_{\tau_i}\indi{\tau_i<\tau_j}+F^i_{\tau_j}\indi{\tau_j<\tau_i}+M^i_{\tau_i}\indi{\tau_i=\tau_j}\bigr].
\end{equation}

\begin{definition}
A timing game $\Gamma$ is a tuple
$$\Bigl( \bigl(\,\Omega, \F, (\F_t)_{t \ge 0}, P\,\bigr), \T\times \T, \bigl(\,L^i,F^i, M^i\,\bigr)_{i=1,2}, \bigl(\pi_i\bigr)_{i=1,2} \Bigr)$$
consisting of a filtered probability space $\bigl(\,\Omega, \F, (\F_t)_{t \ge 0}, P\,\bigr)$,
stopping times $\T$ as pure strategies, adapted processes
$\bigl(\,L^i,F^i, M^i\,\bigr)_{i=1,2}$, and payoffs $\pi_i$ as defined in (\ref{Vi_pure}).
\end{definition}

Obviously, equilibria will be based on solving optimal stopping problems involving the three underlying payoff processes. We need to make some standard regularity assumptions in order to have well defined problems in the following.

\begin{assumption}\label{asm:payoffs}
\par\noindent
\begin{enumerate}
\item
The filtration \(\filt{F}:=\left(\F_t\right)_{t\geq 0}\) satisfies the usual conditions (i.e., \(\filt{F}\) is right-continuous and complete).
\item
The processes \(L^i\), \(F^i\) and \(M^i\), \(i\in\{1,2\}\), are adapted, right-continuous (a.s.) and of class {\rm (D)}, \(M^i\) having an extension with \(E[\lvert M^i_\infty\rvert]<\infty\).
\item
\(F^i\geq M^i\) (a.s., for any \(t\in\R_+\)).
\end{enumerate}
\end{assumption}

\begin{remark}
\par\noindent
\begin{enumerate}
\item\label{rem:XclassD} 
A measurable process \(X\) is of class {\rm (D)} if the family \(\{X_\tau:\tau<\infty\) a.s.\ a stopping time\(\}\) is uniformly integrable, so that the family is bounded in \(L^1(P)\) and pointwise convergence of \(X\) at a stopping time implies convergence in \(L^1(P)\) as well. This is a mild regularity condition implied, e.g., by either \(E[\sup_t\abs{X_t}]<\infty\) or \(\sup_\tau E[\abs{X_\tau}^p]<\infty\) for some \(p>1\). We may equivalently define any extension \(X_\infty\in L^1(P)\) and consider \emph{all} stopping times (possibly taking the value \(\infty\)) in the previous set; cf.\ Lemma \ref{lem:classD}.

\item
It depends on the model whether there is a natural payoff if both players ``never stop'', which may be some limit of \(M^i\) or of \(L^i\). In the latter case we simply set \(M^i_\infty:=L^i_\infty\) and work with \(M^i_\infty\) for a unified payoff notation. For convenience we also define
\begin{equation*}
F^i_\infty:=M^i_\infty.
\end{equation*}

\item The assumption $F^i \ge M^i$ is quite natural if the follower still has a stopping decision to make, such that $F^i$ is the corresponding value function where we include simultaneous stopping as an option for the ``follower''. On a more abstract level the condition means that we are focussing on competitive models without a strict benefit from coordinating.
\end{enumerate}
\end{remark}

\subsection{Subgames}\label{sec:subgame}

Stopping times play a twofold role in our setup. On the one hand, they are players' pure strategies as we have seen above, because they are exactly the feasible plans to stop given the dynamic information \(\filt{F}=\left(\F_t\right)_{t\geq 0}\). On the other hand, we now argue that they encode the starting point of a subgame in our continuous-time framework.

In discrete time and discrete state spaces,  subgames start at a certain node in the game tree; the node needs to be a singleton information set. Such a node can be naturally described by a (time) level $t$ in the tree and by an identifiable set of states of the world (which node $k$ at time $t$ is the case). The random time
$\tau= t$ if we are in node $k$ at time $t$ and $\tau=\infty$ otherwise is then a stopping time that characterizes that node. In continuous time it is not enough to identify subgames by considering times $t\in\R_+$ and events which are measurable at those times, because stopping times are much richer.\footnote{
Similarly the \nbd{\sigma}fields \(\F_\tau\) associated with stopping times are a more general concept of a ``past'' than those for constant times, \(\F_t\).
}
A typical example for a stopping time in standard stochastic models is the first time a Brownian motion exceeds a certain fixed value $b>0$. This ``first passage time'' (or hitting time) has a continuous distribution with full support on $(0,\infty)$.\footnote{
The first passage time has the density \(b(2\pi t^3)^{-\frac12}e^{-b^2/2t}>0\) on \((0,\infty)\); see, e.g., \cite{RevuzYor}, Section II.3.
}
When a strategy is to stop at such a hitting time, we also have to know for subgame perfection what will happen if we are at the hitting time, but stopping does not occur (i.e., off the path of play). If we only had plans for deterministic times $t$, we would have to assemble one for the hitting time out of uncountably many probability zero events, which would not give a well-defined object in general.

Thus we use the notion of stopping time not only as feasible rules when to stop, but also as feasible decision nodes to define subgames in stochastic timing games, where players' plans may be revised.

\begin{definition}
Let $\vartheta\in\T$ be a stopping time. Let $\Gamma$ be a timing game. The subgame $\Gamma^\vartheta$ starting at the stopping time $\vartheta$ is the tuple
$$\Bigl( \bigl(\,\Omega, \F, (\F_t)_{t \ge 0}, P\,\bigr), \T_\vartheta\times \T_\vartheta, \bigl(\,L^i,F^i, M^i\,\bigr)_{i=1,2}, \bigl(\pi^\vartheta_i\bigr)_{i=1,2} \Bigr)$$
consisting of the filtered probability space $\bigl(\,\Omega, \F, (\F_t)_{t \ge 0}, P\,\bigr)$,
stopping times greater equal $\vartheta$
$$\T_\vartheta := \left\{ \tau \in \T : \tau \ge \vartheta \right\}$$ as pure strategies, adapted processes
$\bigl(\,L^i,F^i, M^i\,\bigr)_{i=1,2}$ and conditional payoffs
$$\pi^\vartheta_i\left(\tau_i,\tau_j\right)=E\bigl[L^i_{\tau_i}\indi{\tau_i<\tau_j}+F^i_{\tau_j}\indi{\tau_j<\tau_i}+M^i_{\tau_i}\indi{\tau_i=\tau_j}\bigm|\F_\vartheta \bigr],\quad\tau_i,\tau_j\in\T_\vartheta.$$
\end{definition}

Our definition of subgames can also be seen as an analogy to the general approach to stopping problems for a single decision maker.\footnote{
See, e.g., \cite{ElKaroui81} for the general theory of optimal stopping and the concept of the \emph{Snell envelope}.
}
In general the solution of an optimal stopping problem can be represented as a consistent collection of contingent plans for starting from \emph{any stopping time}. Similarly our approach allows us to speak meaningfully of best replies in subgames and to define subgame-perfect equilibria.

\subsubsection{Mixed strategies in subgames}\label{sec:strateql}

Many timing games have no equilibria in pure strategies.\footnote{
See also \cite{HendricksWilson92} on (non-)existence of equilibria in deterministic preemption games.
}
We thus introduce mixed strategies, following the approach by \cite{FudenbergTirole85} but generalizing it to our stochastic setting. In principle we consider a mixed strategy as a (random) distribution function over time.\footnote{
An alternative approach is to randomize over stopping times before the game starts. \cite{TouziVieille02} show that the two approaches are payoff-equivalent. They do not consider, however, any notion of subgame (perfection) or further extensions as we do.
}
However, as we are aiming for subgame-perfection, we need players to make plans also for times at which stopping will already have occurred with probability one according to the chosen distributions, i.e., which are not expected to be reached. Therefore we first introduce distributions over remaining time as complete plans of action for every subgame and then aggregate them as strategies for the whole timing game. Imposing natural consistency conditions avoids contradictions and ensures well-defined outcomes.

\begin{definition}\label{def:alpha}
Fix a stopping time \(\vartheta\in\T\). An \emph{extended mixed strategy} for player \(i\in\{1,2\}\) in the subgame \(\Gamma^\vartheta\) starting at \(\vartheta\), also called \emph{\nbd{\vartheta}strategy}, is a pair of processes \(\bigl(G^\vartheta_i,\alpha^\vartheta_i\bigr)\) taking values in \([0,1]\), respectively, with the following properties. 
\begin{enumerate}
\item
\(G^\vartheta_i\) is adapted. It is right-continuous and non-decreasing with \(G^\vartheta_{i}(s)=0\) for all \(s<\vartheta\), a.s. 

\item
\(\alpha^\vartheta_i\) is progressively measurable. It is right-continuous where its value is in \((0,1)\), a.s.

\item
\begin{equation*}
\alpha^\vartheta_i(t)>0\Rightarrow G_i^\vartheta(t)=1\qquad\text{for all }t\geq 0\text{, a.s.}
\end{equation*}
\end{enumerate}
We further define \(G^\vartheta_{i}(0-)\equiv 0\), \(G^\vartheta_{i}(\infty)\equiv 1\) and \(\alpha^\vartheta_i(\infty)\equiv 1\) for every extended mixed strategy.
\end{definition}

\begin{remark}
\par\noindent
\begin{enumerate}
\item
As in \cite{FudenbergTirole85}, the extensions \(\alpha^\vartheta_i\) are used as a coordination instrument where the distribution functions \(G^\vartheta_i\) jump to 1, and outcomes will be determined below by a similar limit argument, relying on right-continuity of the former. \(\alpha^\vartheta_i=1\) means definite immediate stopping by player \(i\), whence we do not need a right-hand limit to identify the outcome. In order to accomodate asymmetric models we cannot generally require right-continuity where \(\alpha^\vartheta_i=0\), either; see Section \ref{subsec:issupreem}.
\item
Progressive measurability ensures enough structure in the time domain such that \(\alpha^\vartheta_i(\tau)\) will be \nbd{\F_\tau}measurable for any \(\tau\in\T\).\footnote{
Formally, the mapping \(\alpha^\vartheta_i:\Omega\times[0,t]\to\R\), \((\omega,s)\mapsto\alpha^\vartheta_i(\omega,s)\) must be \(\F_t\otimes\B([0,t])\)-measurable for any \(t\in\R_+\). It is a stronger condition than (i.e., it implies) adaptedness, but weaker than (i.e., implied by) optionality, which we have for \(G^\vartheta_i\) by right-continuity. However, given Definition \ref{def:outcome} of outcome probabilities, we can also assume \(\alpha^\vartheta_i\) to be optional without loss by using its unique optional projection \({}^o\alpha^\vartheta_i\) instead, which agrees with \(\alpha^\vartheta_i\) a.s.\ at any \(\tau\in\T\). \({}^o\alpha^\vartheta_i\) is also right-continuous where \(\alpha^\vartheta_i\) is so at any \(\tau\in\T\), see Lemma \ref{lem:oalpharc}.
}   
The extensions at \(t=0\) and \(t=\infty\) are defined only for notational convenience in the definition of payoffs. 
\end{enumerate}
\end{remark}

Now we aggregate strategies for all single subgames to a strategy for the whole timing game. It can well be that two stopping times coincide with a certain positive probability. The strategies are required to give unique and well-defined prescriptions in that case.

\begin{definition}\label{def:strat}
An \emph{extended mixed strategy} for player \(i\in\{1,2\}\) for the timing game \(\Gamma\) is a family of \nbd{\vartheta}strategies \(\bigl(G_i,\alpha_i\bigr)=\bigl(G^\vartheta_i,\alpha^\vartheta_i\bigr)_{\vartheta\in\T}\) satisfying the consistency condition \(G^\vartheta_i=G^{\vartheta'}_i\) and \(\alpha^\vartheta_i=\alpha^{\vartheta'}_i\) a.s.\ on the event \(\{\vartheta=\vartheta'\}\) for all stopping times \(\vartheta,\vartheta'\in\T\).\footnote{
This consistency condition is also implied by the time consistency introduced in Definition \ref{def:TC_extended}, because there we require \(G^\vartheta_i=G^{\vartheta\wedge\vartheta'}_i\) on \(\{\vartheta=\vartheta\wedge\vartheta'\}\) and \(G^{\vartheta'}_i=G^{\vartheta\wedge\vartheta'}_i\) on \(\{\vartheta'=\vartheta\wedge\vartheta'\}\); similarly for \(\alpha^\cdot_i\).
}
We denote the set of strategies for each player by \(\S\).
\end{definition}

\subsubsection{Payoffs}

We have to generalize the preliminary payoffs \eqref{Vi_pure} to subgames and (extended) mixed strategies.

\begin{definition}\label{def:payoffs_extended}
Given two extended mixed strategies \(\bigl(G^\vartheta_i,\alpha^\vartheta_i\bigr)\), \(\bigl(G^\vartheta_j,\alpha^\vartheta_j\bigr)\), \(i,j\in\{1,2\}\), \(i\not=j\),  the \emph{payoff} of player \(i\) in the subgame starting at \(\vartheta\in\T\) is
\begin{align*}
V^\vartheta_{i}\bigl(G^\vartheta_{i},\alpha^\vartheta_i,G^\vartheta_{j},\alpha^\vartheta_j\bigr):=E\biggl[&\int_{[0,\hat\tau^\vartheta)}\bigl(1-G^\vartheta_{j}(s)\bigr)L^i_s\,dG^\vartheta_{i}(s)\nonumber\\
+{}&\int_{[0,\hat\tau^\vartheta)}\bigl(1-G^\vartheta_{i}(s)\bigr)F^i_s\,dG^\vartheta_{j}(s)\nonumber\\
+{}&\sum_{s\in[0,\hat\tau^\vartheta)}\Delta G^\vartheta_{i}(s)\Delta G^\vartheta_{j}(s)M^i_s\\
+{}&\lambda^\vartheta_{L,i}L^i_{\hat\tau^\vartheta}+\lambda^\vartheta_{L,j}F^i_{\hat\tau^\vartheta}+\lambda^\vartheta_{M}M^i_{\hat\tau^\vartheta}\biggm|\F_\vartheta\biggr],
\end{align*}
with \(\hat\tau^\vartheta\), \(\lambda^\vartheta_{L,i}\), \(\lambda^\vartheta_{L,j}\) and \(\lambda^\vartheta_{M}\) as in Definition \ref{def:outcome} below.
\end{definition}

Lemma \ref{lem:LdG} ensures not only that the pathwise integrals (which include possible jumps of the \emph{right-continuous} integrators at 0, since \(i\) can indeed become leader/follower from an initial jump of \(G^\vartheta_{i}\)/\(G^\vartheta_{j}\), resp.) are well defined under Assumption \ref{asm:payoffs}, but also that the payoffs are bounded in \(L^1(P)\), uniformly across all feasible strategies. 

If the players do not use the extensions (i.e., \(\alpha^\vartheta_1=\alpha^\vartheta_2\equiv 0\) on \([\vartheta,\infty)\)), then \(\hat\tau^\vartheta=\infty\), \(\lambda^\vartheta_{L,1}=\lambda^\vartheta_{L,2}=0\) and \(\lambda^\vartheta_{M}=\Delta G^\vartheta_{1}(\infty)\Delta G^\vartheta_{2}(\infty)\), so we have the same payoffs as in the analogous model with only the mixed strategies \(G^\vartheta_{1}\), \(G^\vartheta_{2}\). Precisely, the ``terminal'' payoffs then are \((1-G^\vartheta_{i}(\infty-))(1-G^\vartheta_{j}(\infty-))M^i_\infty\) because we have defined \(G^\vartheta_i(\infty)=\alpha^\vartheta_i(\infty)=1\). 

\(\hat\tau^\vartheta\) denotes the first time any of the extensions \(\alpha^\vartheta_\cdot\) is positive. If \(\hat\tau^\vartheta<\infty\), then \(\alpha^\vartheta_1\) and \(\alpha^\vartheta_2\) determine final outcome probabilities by the limit interpretation of \cite{FudenbergTirole85}, but with our much weaker regularity assumptions needed to accomodate asymmetric or stochastic games. In particular, Fudenberg and Tirole require their \(\alpha_i(\cdot)\) to be right-differentiable to apply a Taylor-expansion at the decisive point, when the \(\alpha_i(\cdot)\) start to be positive. One cannot hope for such smoothness in stochastic models. With less restrictions, we need to take more care that outcomes are always well defined and consistent with the limit argument whenever possible. Once we have done so, for later applications one can rely on general results about equilibria in our framework presented in Section \ref{sec:extended}, as it is also illustrated in Section \ref{sec:exm}.

Recall from \cite{FudenbergTirole85} that the \(\alpha^\vartheta_i(\cdot)\) are interpreted as the limits of stage stopping probabilities in a repeated stopping game where one lets the period length vanish. A right-continuous limit \(\alpha^\vartheta_i(\cdot)\) means that the sequences of stage stopping probabilities are basically constant on small fixed time intervals. The corresponding limit of outcome probabilities~-- of who stops first: player 1, 2 or both~-- is then that of an infinitely repeated stopping game with the given constant stage probabilities, determined as follows. Define the functions \(\mu_L\) and \(\mu_M\) from \([0,1]^2\setminus (0,0)\) to \([0,1]\) by
\begin{align*}
\mu_L(x,y):=\frac{x(1-y)}{x+y-xy}\qquad\text{and}\qquad\mu_M(x,y):=\frac{xy}{x+y-xy}.
\end{align*}
\(\mu_L(a_i,a_j)\) is the probability that player \(i\) stops first in an infinitely repeated stopping game where \(i\) plays constant stage stopping probabilities \(a_i\) and player \(j\) plays constant stage probabilities \(a_j\). \(\mu_M(a_i,a_j)\) is the probability of simultaneous stopping and \(1-\mu_L(a_i,a_j)-\mu_M(a_i,a_j)=\mu_L(a_j,a_i)\) that of player \(j\) stopping first. Only \(\mu_M\) admits a continuous extension at the origin, \(\mu_M(0,0):=0\), but \(\mu_L\) does not, which requires careful treatment.

\begin{definition}\label{def:outcome}
Given \(\vartheta\in\T\) and a pair of extended mixed strategies \(\bigl(G^\vartheta_1,\alpha^\vartheta_1\bigr)\) and \(\bigl(G^\vartheta_2,\alpha^\vartheta_2\bigr)\), the \emph{outcome probabilities} \(\lambda^\vartheta_{L,1}\), \(\lambda^\vartheta_{L,2}\) and \(\lambda^\vartheta_M\) at
\begin{equation*}
\hat\tau^\vartheta:=\inf\{t\geq\vartheta\mid\alpha^\vartheta_1(t)+\alpha^\vartheta_2(t)>0\}
\end{equation*}
are defined as follows.\footnote{
There are no conditional expectations in the definition, which one might expect as we are taking limits of ``future'' outcome probabilities that are themselves random. However, we have pointwise right-hand limits and boundedness where \(\alpha^\vartheta_i\in(0,1)\) and thus convergence  of expectations when we apply the limit argument. Even if the \(\liminf(\cdot)\) and \(\limsup(\cdot)\) in the last case in the definition differ, the latter are progressively measurable processes by Theorem IV.33 (c) in \cite{DellacherieMeyer78} and hence \nbd{\F_\tau}measurable for any \(\tau\in\T\).
}
Let \(i,j\in\{1,2\}\), \(i\not=j\). 

\noindent
If \(\hat\tau^\vartheta<\hat\tau^\vartheta_j:=\inf\{t\geq\vartheta\mid\alpha^\vartheta_j(t)>0\}\), then
\begin{align*}
\lambda^\vartheta_{L,i}:={}&\bigl(1-G_i^\vartheta(\hat\tau^\vartheta-)\bigr)\bigl(1-G_j^\vartheta(\hat\tau^\vartheta)\bigr),\\
\\
\lambda^\vartheta_M:={}&\bigl(1-G_i^\vartheta(\hat\tau^\vartheta-)\bigr)\alpha^\vartheta_i(\hat\tau^\vartheta)\Delta G_j^\vartheta(\hat\tau^\vartheta).
\end{align*}
If \(\hat\tau^\vartheta<\hat\tau^\vartheta_i:=\inf\{t\geq\vartheta\mid\alpha^\vartheta_i(t)>0\}\), then
\begin{align*}
\lambda^\vartheta_{L,i}:={}&\bigl(1-G_j^\vartheta(\hat\tau^\vartheta-)\bigr)\bigl(1-\alpha_j(\hat\tau^\vartheta)\bigr)\Delta G_i^\vartheta(\hat\tau^\vartheta),\\
\\
\lambda^\vartheta_M:={}&\bigl(1-G_j^\vartheta(\hat\tau^\vartheta-)\bigr)\alpha^\vartheta_j(\hat\tau^\vartheta)\Delta G_i^\vartheta(\hat\tau^\vartheta).
\end{align*}
If \(\hat\tau^\vartheta=\hat\tau^\vartheta_1=\hat\tau^\vartheta_2\) and \(\alpha^\vartheta_1(\hat\tau^\vartheta)\vee\alpha^\vartheta_2(\hat\tau^\vartheta)=1\) or \(\alpha^\vartheta_1(\hat\tau^\vartheta)\wedge\alpha^\vartheta_2(\hat\tau^\vartheta)>0\), resp., then
\begin{align*}
\lambda^\vartheta_{L,i}:={}&\bigl(1-G_i^\vartheta(\hat\tau^\vartheta-)\bigr)\bigl(1-G_j^\vartheta(\hat\tau^\vartheta-)\bigr)\mu_L(\alpha^\vartheta_i(\hat\tau^\vartheta),\alpha^\vartheta_j(\hat\tau^\vartheta)),\\
\\
\lambda^\vartheta_M:={}&\bigl(1-G_i^\vartheta(\hat\tau^\vartheta-)\bigr)\bigl(1-G_j^\vartheta(\hat\tau^\vartheta-)\bigr)\mu_M(\alpha^\vartheta_1(\hat\tau^\vartheta),\alpha^\vartheta_2(\hat\tau^\vartheta)).
\end{align*}
If \(\hat\tau^\vartheta=\hat\tau^\vartheta_1=\hat\tau^\vartheta_2\), \(\alpha^\vartheta_1(\hat\tau^\vartheta)\vee\alpha^\vartheta_2(\hat\tau^\vartheta)<1\) and \(\alpha^\vartheta_1(\hat\tau^\vartheta)\wedge\alpha^\vartheta_2(\hat\tau^\vartheta)=0\), then
\begin{align*}
\lambda^\vartheta_{L,i}:={}&\bigl(1-G_i^\vartheta(\hat\tau^\vartheta-)\bigr)\bigl(1-G_j^\vartheta(\hat\tau^\vartheta-)\bigr)\bigl(1-\alpha_j^\vartheta(\hat\tau^\vartheta)\bigr)\\
&\cdot\biggl(\alpha_i^\vartheta(\hat\tau^\vartheta)+\bigl(1-\alpha_i^\vartheta(\hat\tau^\vartheta)\bigr)\,\frac{1}{2}\!
\begin{aligned}[t]\biggl\{&\liminf_{\underset{\alpha^\vartheta_i(t)+\alpha^\vartheta_j(t)>0}{t\searrow\hat\tau^\vartheta}}\mu_L(\alpha^\vartheta_i(t),\alpha^\vartheta_j(t))\biggr.\\
\biggl.+&\limsup_{\underset{\alpha^\vartheta_i(t)+\alpha^\vartheta_j(t)>0}{t\searrow\hat\tau^\vartheta}}\mu_L(\alpha^\vartheta_i(t),\alpha^\vartheta_j(t))\biggr\}\biggr),\end{aligned}\\
\\
\lambda^\vartheta_M:={}&\bigl(1-G_i^\vartheta(\hat\tau^\vartheta-)\bigr)\bigl(1-G_j^\vartheta(\hat\tau^\vartheta-)\bigr)-\lambda^\vartheta_{L,i}-\lambda^\vartheta_{L,j}\\
={}&\bigl(1-G_i^\vartheta(\hat\tau^\vartheta-)\bigr)\bigl(1-G_j^\vartheta(\hat\tau^\vartheta-)\bigr)\bigl(1-\alpha_i^\vartheta(\hat\tau^\vartheta)\bigr)\bigl(1-\alpha_j^\vartheta(\hat\tau^\vartheta)\bigr)\\
&\cdot\mu_M(\alpha^\vartheta_1(\hat\tau^\vartheta+),\alpha^\vartheta_2(\hat\tau^\vartheta+))\qquad\qquad\text{if }\alpha^\vartheta_1(\hat\tau^\vartheta+)\text{ and }\alpha^\vartheta_2(\hat\tau^\vartheta+)\text{ exist}.
\end{align*}
\end{definition}

\begin{remark}
\noindent
\begin{enumerate}
\item
\(\lambda^\vartheta_M\) is the probability of simultaneous stopping at \(\hat\tau^\vartheta\), while \(\lambda^\vartheta_{L,i}\) is the probability of player \(i\) becoming the leader, i.e., that of player \(j\) becoming follower. It holds that \(\lambda^\vartheta_M+\lambda^\vartheta_{L,i}+\lambda^\vartheta_{L,j}=\bigl(1-G_i^\vartheta(\hat\tau^\vartheta-)\bigr)\bigl(1-G_j^\vartheta(\hat\tau^\vartheta-)\bigr)\). Dividing by \(\bigl(1-G_i^\vartheta(\hat\tau^\vartheta-)\bigr)\bigl(1-G_j^\vartheta(\hat\tau^\vartheta-)\bigr)\) where feasible yields the corresponding conditional probabilities, which will have to satisfy time consistency below. 

\item
In the definition we first consider the two cases when one player plays \(\alpha^\vartheta_\cdot\) positive against an isolated mass point \(\Delta G^\vartheta_\cdot\) of the other. In the third case either some \(\alpha^\vartheta_\cdot=1\) and we do not need a limit, or the latter is well-behaved by right-continuity. In the last case we have a limit for simultaneous stopping as soon as \(\alpha^\vartheta_1(\hat\tau^\vartheta+)\) and \(\alpha^\vartheta_2(\hat\tau^\vartheta+)\) exist because \(\mu_M\) is continuous also at 0. However, there might be no limit of \(\mu_L\) if \(\alpha^\vartheta_1(\hat\tau^\vartheta)=\alpha^\vartheta_2(\hat\tau^\vartheta)=0\) even when both \(\alpha^\vartheta_\cdot\) are continuous.\footnote{
If \(\alpha^\vartheta_1(\hat\tau^\vartheta)\vee\alpha^\vartheta_2(\hat\tau^\vartheta)<1\) and \(\alpha^\vartheta_1(\hat\tau^\vartheta+)\), \(\alpha^\vartheta_2(\hat\tau^\vartheta+)\) exist, then the limit \emph{only} may not exist if \(\alpha^\vartheta_1(\hat\tau^\vartheta+)=\alpha^\vartheta_2(\hat\tau^\vartheta+)=0\), i.e., if \(\alpha^\vartheta_1(\hat\tau^\vartheta)=\alpha^\vartheta_2(\hat\tau^\vartheta)=0\): if \(\alpha^\vartheta_i(\hat\tau^\vartheta+)>0\), the limit of \(\mu_L\) is determined by continuity. If the limit in a potential equilibrium does not exist, both players will be indifferent about the roles; see Lemma \ref{lem:limit}.
}  

Here we differ from \cite{FudenbergTirole85} who ask for right-differentiability and a positive derivative to apply a Taylor expansion, which is a too strong requirement for asymmetric or stochastic models; cf.\ Section \ref{sec:extended}. Taking instead the symmetric combination of \(\liminf\) and \(\limsup\) ensures consistency whenever the limit exists, independence of the players' names, and that \(\lambda^\vartheta_M\) coincides with its associated limit whenever the latter exists. Furthermore, our solution provides no incentives for the players to create ambiguity about the limit of \(\mu_L\) by their choice of strategies (i.e., to exploit the rule of the last case above): in Section \ref{sec:extended} we show that if there exists a best reply to an extended mixed strategy, then there is a pure one in general.
\end{enumerate}
\end{remark}

\subsubsection{Time consistency and subgame-perfect equilibrium}

Our definition of strategies already included a consistency condition to avoid contradictions in view of the high level of redundancy, given the potentially enormous number of stopping times, i.e., subgames differing only on small events. For a subgame-perfect equilibrium we further require time consistency in the form of Bayes' law, resp.\ that conditional stopping probabilities agree.

\begin{definition}\label{def:TC_extended}
An extended mixed strategy \(\bigl(G_i,\alpha_i\bigr)\) is \emph{time-consistent} if for all \(\vartheta\leq\vartheta'\in\T\)
\begin{flalign*}
&& \vartheta'\leq t\in\R_+ &\Rightarrow\ G_i^\vartheta(t)=G_i^\vartheta(\vartheta'-)+\bigl(1-G_i^\vartheta(\vartheta'-)\bigr)G_i^{\vartheta'}(t)\quad\text{a.s.} && \\
&\text{and} \\
&& \vartheta'\leq\tau\in\T &\Rightarrow\ \alpha^\vartheta_i(\tau)=\alpha^{\vartheta'}_i(\tau)\quad\text{a.s.} &&
\end{flalign*}
\end{definition}

The equilibrium concept is then natural.

\begin{definition}\label{def:SPE_extended}
A \emph{subgame-perfect equilibrium} for the timing game is a pair \(\bigl(G_1,\alpha_1\bigr)\), \(\bigl(G_2,\alpha_2\bigr)\) of time-consistent extended mixed strategies such that for all \(\vartheta\in\T\), \(i,j\in\{1,2\}\), \(i\not=j\), and extended mixed strategies \(\bigl(G_a^\vartheta,\alpha^\vartheta_a\bigr)\)
\begin{equation*}
V_i^\vartheta(G_i^\vartheta,\alpha^\vartheta_i,G_j^\vartheta,\alpha^\vartheta_j)\geq V_i^\vartheta(G_a^\vartheta,\alpha^\vartheta_a,G_j^\vartheta,\alpha^\vartheta_j)\quad\text{a.s.},
\end{equation*}
i.e., such that every pair \(\bigl(G^\vartheta_1,\alpha^\vartheta_1\bigr)\), \(\bigl(G^\vartheta_2,\alpha^\vartheta_2\bigr)\) is an \emph{equilibrium} in the subgame at \(\vartheta\in\T\), respectively.
\end{definition}

\section{The role of extended mixed strategies in equilibrium}\label{sec:extended}

In this section we analyze the payoffs that can result from extended mixed strategies and show that equilibrium conditions have strong implications for the relevant choices of \(\alpha^\vartheta_i\). In particular we establish a quite general characterization of subgame-perfect equilibria in games of preemption type, which addresses the issues with equilibria proposed in the literature and which we will use intensely in the applications in Section \ref{sec:exm}.

Whenever some \(\alpha^\cdot_j\) is positive, player \(i\) can secure at least the conditional payoff \(F^i\) because the game definitely ends. More specifically, suppose \(\vartheta=\hat\tau^\vartheta_j=\inf\{t\geq\vartheta\mid\alpha^\vartheta_j(t)>0\}\). Then \(G^\vartheta_{j}(\vartheta)=1\) and player \(i\)'s payoff from any strategy \(\bigl(G^\vartheta_{i},\alpha^\vartheta_i\bigr)\) will be at most that from stopping at infinity or that from stopping immediately, i.e., \(V^\vartheta_{i}\bigl(G^\vartheta_{i},\alpha^\vartheta_i,G^\vartheta_{j},\alpha^\vartheta_j\bigr)\leq\max\{F^i_\vartheta,\alpha^\vartheta_j(\vartheta)M^i_\vartheta+\bigl(1-\alpha^\vartheta_j(\vartheta)\bigr)L^i_\vartheta\}\).\footnote{
The only case where this may not be so easy to check is when the outcome probabilities \(\lambda^\vartheta_{L,i}\) and \(\lambda^\vartheta_M\) involve non-trivial limits due to \(\vartheta=\hat\tau^\vartheta_1=\hat\tau^\vartheta_2\), \(\alpha^\vartheta_1(\vartheta)\wedge\alpha^\vartheta_2(\vartheta)=0\) and \(\alpha^\vartheta_1(\vartheta)\vee\alpha^\vartheta_2(\vartheta)<1\); the verification for this case is given by Lemma \ref{lem:alphapayoff}. The payoff is in fact a convex combination of \(F^i_\vartheta\) and \(\alpha^\vartheta_j(\vartheta)M^i_\vartheta+(1-\alpha^\vartheta_j(\vartheta))L^i_\vartheta\), except for possibly in the case when \(\alpha^\vartheta_j(\vartheta)=0\) and \(\lambda^\vartheta_M>0\).} 
The maximum can be realized by either the strategy \(\bigl(\indi{t\geq\infty},\indi{t\geq\infty}\bigr)\) or the strategy \(\bigl(\indi{t\geq\vartheta},\indi{t\geq\vartheta}\bigr)\). Consequently, there is a best reply for player \(i\) which is pure and stopping/waiting is strictly optimal iff
\begin{equation}\label{indiff}
L^i_\vartheta-F^i_\vartheta\gtrless\alpha^\vartheta_j(\vartheta)\bigl(L^i_\vartheta-M^i_\vartheta\bigr),
\end{equation}
respectively.\footnote{\label{fn:ext>pure}
An extended mixed strategy can only be strictly superior for some player \(i\) at a jump \(\Delta G^\vartheta_{j}(\tau)\) that is not terminal, in order to secure the payoff \(\Delta G^\vartheta_{j}(\tau)F^i_\tau+(1-G^\vartheta_{j}(\tau))L^i_\tau\) if this is the unique optimal limit of pure (and therefore of any standard mixed) strategies. That limit is not attainable without extensions if \(F^i_\tau>M^i_\tau\).
}
Since \(M^i_\vartheta\leq F^i_\vartheta\), waiting is of course (weakly) optimal whenever \(L^i_\vartheta\leq F^i_\vartheta\) and \(i\) can only be indifferent in that case if \(\alpha^\vartheta_j(\vartheta)\bigl(F^i_\vartheta-M^i_\vartheta\bigr)=0\), with \(\alpha^\vartheta_j(\vartheta)=1\) if \(L^i_\vartheta<F^i_\vartheta\).

If \(L^i_\vartheta>F^i_\vartheta\), \(i\) is indifferent iff
\begin{equation}\label{alphaindiff}
\alpha^\vartheta_j(\vartheta)=\frac{L^i_\vartheta-F^i_\vartheta}{L^i_\vartheta-M^i_\vartheta},
\end{equation}
which is in \((0,1]\) a.s.

Based on \eqref{alphaindiff} we can thus find equilibria of immediate stopping whenever there is a first-mover advantage for both players. These equilibria can be interpreted as preemption in the region \(\{(L^1-F^1)\wedge(L^2-F^2)>0\}\). In order to prepare for subgame-perfect equilibria in asymmetric games where the preemption region is not reached immediately (e.g.\ Theorem \ref{thm:Lsubmart} below), we make sure that if one player is indifferent about becoming leader or follower while the other has a strict preference, then the latter can realize the advantage.\footnote{
The proposition can be modified easily to construct equilibria where no player can realize a first-mover advantage and both \(i=1,2\) receive the payoff \(F^i_\vartheta\), respectively. The corresponding extensions are \(\alpha^\vartheta_i(t)=\indi{t=\tau^\cP(t)}(\indi{L^j_t=M^j_t}+\indi{L^j_t>M^j_t}(L^j_t-F^j_t)/(L^j_t-M^j_t))\).
}

\begin{proposition}\label{prop:eqlL>F}
For any \(\tau\in\T\) let 
\begin{equation*}
\tau^\cP(\tau):=\inf\{u\geq\tau\mid(L^1_u-F^1_u)\wedge(L^2_u-F^2_u)>0\}.
\end{equation*}
Now suppose \(\vartheta\in\T\) satisfies \(\vartheta=\tau^\cP(\vartheta)\) a.s. Then \(\bigl(G^\vartheta_1,\alpha^\vartheta_1\bigr)\), \(\bigl(G^\vartheta_2,\alpha^\vartheta_2\bigr)\) given by
\begin{equation*}
\alpha^\vartheta_i(t)=\begin{cases}
1 & \text{if}\quad\!\begin{aligned}[t]
&t=\tau^\cP(t),\,L^j_t=F^j_t\text{ and}\\
&\!\bigl(L^i_t>F^i_t\text{ or }F^j_t=M^j_t\bigr)
\end{aligned}\\
\\
\indi{L^1_t>F^1_t}\indi{L^2_t>F^2_t}\displaystyle\frac{L^j_t-F^j_t}{L^j_t-M^j_t} & \text{else}
\end{cases}
\end{equation*}
for any \(t\in[\vartheta,\infty)\) and \(G^\vartheta_i=\indi{t\geq\vartheta}\), \(i=1,2\), \(j\in\{1,2\}\setminus i\), are an equilibrium in the subgame at \(\vartheta\).

The resulting payoffs are \(V_i^\vartheta(G_i^\vartheta,\alpha^\vartheta_i,G_j^\vartheta,\alpha^\vartheta_j)=F^i_\vartheta\indi{L^j_\vartheta>F^j_\vartheta}+L^i_\vartheta\indi{L^j_\vartheta=F^j_\vartheta}\).
\end{proposition}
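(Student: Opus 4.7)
The plan is to first verify that $(G^\vartheta_i,\alpha^\vartheta_i)$ defines a valid extended mixed strategy in the sense of Definition~\ref{def:alpha}, then to compute the equilibrium payoff, and finally to show that no deviation can strictly improve upon it. The process $G^\vartheta_i=\indi{t\geq\vartheta}$ is right-continuous, non-decreasing, adapted (since $\vartheta\in\T$) and vanishes for $t<\vartheta$. The extension $\alpha^\vartheta_i$ takes values in $[0,1]$ because Assumption~\ref{asm:payoffs}(iii) gives $L^j\geq F^j\geq M^j$, so the ratio $(L^j_t-F^j_t)/(L^j_t-M^j_t)$ lies in $[0,1]$ on $\{L^j_t>M^j_t\}$ and equals $1$ when $F^j_t=M^j_t<L^j_t$. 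Progressive measurability of $\alpha^\vartheta_i$ follows from the progressive measurability of the right-continuous adapted processes $L^k,F^k,M^k$ together with $\{t=\tau^\cP(t)\}=\{(L^1_t-F^1_t)\wedge(L^2_t-F^2_t)>0\}$; right-continuity of $\alpha^\vartheta_i$ where its value lies in $(0,1)$ forces the ``else'' branch and then follows from right-continuity of the six payoff processes. Condition~(iii) of Definition~\ref{def:alpha} is trivial because $G^\vartheta_i(t)=1$ for every $t\geq\vartheta$.

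Next, since $\vartheta=\tau^\cP(\vartheta)$, the indicators $\indi{L^k_\vartheta>F^k_\vartheta}$ both equal one at $t=\vartheta$, and the ``else'' branch yields $a:=\alpha^\vartheta_i(\vartheta)=(L^j_\vartheta-F^j_\vartheta)/(L^j_\vartheta-M^j_\vartheta)\in(0,1]$ and symmetrically $b:=\alpha^\vartheta_j(\vartheta)\in(0,1]$. Hence $\hat\tau^\vartheta=\vartheta=\hat\tau^\vartheta_1=\hat\tau^\vartheta_2$ with $a\wedge b>0$, placing us in the third case of Definition~\ref{def:outcome}; the integrals on $[0,\hat\tau^\vartheta)$ vanish because both $G^\vartheta_\cdot$ are zero there, and the payoff reduces to
\begin{equation*}
V^\vartheta_i=\mu_L(a,b)\,L^i_\vartheta+\mu_L(b,a)\,F^i_\vartheta+\mu_M(a,b)\,M^i_\vartheta.
\end{equation*}
The choice of $b$ yields the indifference identity $(1-b)L^i_\vartheta+bM^i_\vartheta=F^i_\vartheta$, and substituting the explicit forms of $\mu_L$ and $\mu_M$ together with this identity gives $V^\vartheta_i=F^i_\vartheta$, matching the stated formula (its second summand vanishes since $\vartheta=\tau^\cP(\vartheta)$ forces $L^j_\vartheta>F^j_\vartheta$).

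Finally, to establish the equilibrium inequality I would invoke the upper bound on deviation payoffs developed just before the proposition and formalized in Lemma~\ref{lem:alphapayoff}: whenever the opponent plays some extension with $\alpha^\vartheta_j(\vartheta)>0$, any strategy $(G^\vartheta_a,\alpha^\vartheta_a)$ for player~$i$ yields a payoff bounded above by $\max\{F^i_\vartheta,\,\alpha^\vartheta_j(\vartheta)M^i_\vartheta+(1-\alpha^\vartheta_j(\vartheta))L^i_\vartheta\}$. Plugging in $\alpha^\vartheta_j(\vartheta)=b$, the second argument equals $F^i_\vartheta$ by the same indifference identity, so every deviation yields at most $F^i_\vartheta$, and the equilibrium inequality holds a.s.

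The main obstacle is mostly bookkeeping: one must associate each candidate deviation with the correct case of Definition~\ref{def:outcome} and invoke the upper bound with due care for its regularity hypotheses. Here Lemma~\ref{lem:alphapayoff} is directly applicable because the delicate situation it isolates ($\alpha^\vartheta_j(\vartheta)=0$ combined with $\lambda^\vartheta_M>0$) cannot arise when $\alpha^\vartheta_j(\vartheta)=b>0$. Conceptually, the entire proof hinges on the single algebraic identity $(1-b)L^i_\vartheta+bM^i_\vartheta=F^i_\vartheta$ built into the choice of $b$, which simultaneously yields the claimed equilibrium payoff and rules out any profitable unilateral deviation.
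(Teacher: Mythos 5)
Your verification of admissibility and your computation in the ``interior'' case are fine and match the paper's own indifference argument based on \eqref{alphaindiff}: with \(b=\alpha^\vartheta_j(\vartheta)=(L^i_\vartheta-F^i_\vartheta)/(L^i_\vartheta-M^i_\vartheta)\) one indeed has \((1-b)L^i_\vartheta+bM^i_\vartheta=F^i_\vartheta\), the equilibrium payoff collapses to \(F^i_\vartheta\), and no deviation can beat \(\max\{F^i_\vartheta,\alpha^\vartheta_j(\vartheta)M^i_\vartheta+(1-\alpha^\vartheta_j(\vartheta))L^i_\vartheta\}\). But there is a genuine gap: you assert that \(\vartheta=\tau^\cP(\vartheta)\) ``forces \(L^j_\vartheta>F^j_\vartheta\)'' for both players, and this is false. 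Since \(\tau^\cP(\vartheta)\) is the hitting time of the set where the strict inequalities hold, right-continuity only yields \(L^i_\vartheta\geq F^i_\vartheta\), \(i=1,2\); the hitting time can (and, when \(L^i,F^i\) are continuous and the subgame starts outside the preemption region, typically does) land on the boundary where \(L^1_\vartheta=F^1_\vartheta\) or \(L^2_\vartheta=F^2_\vartheta\). This is precisely why the stated payoff has the second summand \(L^i_\vartheta\indi{L^j_\vartheta=F^j_\vartheta}\), and why the definition of \(\alpha^\vartheta_i\) has the first branch with value \(1\); your proof never engages with that branch at all, so the case analysis that constitutes most of the paper's proof is missing.

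Concretely, the paper's proof, after disposing of the interior case exactly as you do, verifies optimality and the claimed payoffs in the remaining configurations: \(L^i_\vartheta=F^i_\vartheta\) with \(L^j_\vartheta>F^j_\vartheta\) (then \(\alpha^\vartheta_j(\vartheta)=1\), \(\alpha^\vartheta_i(\vartheta)=0\), player \(j\) realizes \(L^j_\vartheta\) and player \(i\) gets \(F^i_\vartheta=L^i_\vartheta\)); \(L^1_\vartheta=F^1_\vartheta=L^2_\vartheta=F^2_\vartheta\) with the subcases \(F^\cdot_\vartheta=M^\cdot_\vartheta\) versus \(F^\cdot_\vartheta>M^\cdot_\vartheta\) (where one must check, e.g., that \(\alpha^\vartheta_i(\vartheta)=\alpha^\vartheta_i(\vartheta+)=0\) gives \(\lambda^\vartheta_M=0\) and payoff \(F^i_\vartheta=L^i_\vartheta\)). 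These boundary cases are not a technicality: they are exactly what Theorem \ref{thm:Lsubmart} relies on, since there \((L^1_{\tau^\cP(\vartheta)}-F^1_{\tau^\cP(\vartheta)})\wedge(L^2_{\tau^\cP(\vartheta)}-F^2_{\tau^\cP(\vartheta)})=0\) on \(\{\vartheta<\tau^\cP(\vartheta)\}\) and the asymmetric outcome (the advantaged player securing \(L^i\)) is what makes the preemption value \(E[L^i_{\tau^\cP(\vartheta)}\mid\F_\vartheta]\) attainable. To repair the proposal you must drop the claim that both strict inequalities hold at \(\vartheta\), and add the verification of mutual best replies and payoffs in each of the boundary configurations, including the role of the right-hand limits \(\alpha^\vartheta_\cdot(\vartheta+)\) in Definition \ref{def:outcome}. (A side remark: Lemma \ref{lem:alphapayoff} is not where the deviation bound is ``formalized''; it only covers the one delicate subcase \(0=\alpha^\vartheta_i(\vartheta)<\alpha^\vartheta_j(\vartheta)<1\) of that bound, which, as you note, is not the configuration arising in the interior case.)
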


\noindent
{\it Proof:} See appendix.

\begin{remark}
Note that in particular \(\alpha^\vartheta_i(t)=0\) for all \(t<\tau^\cP(t)\). On the ``boundary'' of the preemption region, i.e., if \(t=\tau^\cP(t)\) but \((L^1_t-F^1_t)\wedge(L^2_t-F^2_t)=0\), either \(\alpha^\vartheta_\cdot\) might not be right-continuous for three reasons. First, if we have \(L^j_t=F^j_t=M^j_t\), there might not be a right-hand limit \(\alpha^\vartheta_i(t+)\), which we can accomodate by setting \(\alpha^\vartheta_i(t)=1\) as \(j\) will be completely indifferent. Second, in asymmetric models we have to ensure that a player with a strict first-mover advantage \(L^i_t-F^i_t>0\) can realize it by playing \(\alpha^\vartheta_i(t)=1\) and the other playing \(\alpha^\vartheta_j(t)=0\), cf.\ Theorem \ref{thm:Lsubmart} below. Third, if \(L^j_t>F^j_t\) but \(L^i_t=F^i_t\), then \(\indi{L^i_t>F^i_t}\) might not have a right-hand limit; in this case we have \(\alpha^\vartheta_i(t)=0\) (and \(\alpha^\vartheta_j(t)=1\)).
\end{remark}

In the symmetric case, when the payoff processes do not depend on the players, each player becomes leader or follower with probability \(\frac{1}{2}\) if \(L_\vartheta=F_\vartheta>M_\vartheta\), because then the \(\liminf\) and \(\limsup\) in Definition \ref{def:outcome} are both \(\frac{1}{2}\). This is the same outcome as the result of the Taylor expansion in \cite{FudenbergTirole85} for their smooth, deterministic model. If \(L_\vartheta>F_\vartheta\), there is a positive probability of simultaneous stopping, however, which is the price of preemption.

\(\alpha^\vartheta_i\) in Proposition \ref{prop:eqlL>F} does not depend on \(\vartheta\) (except for the feasibility condition \(\alpha^\vartheta_i\)=0 on \([0,\vartheta)\), of course), so applying the construction to any \(\vartheta\in\T\) induces a subgame-perfect equilibrium if for both \(i=1,2\), \(L^i>F^i\) almost everywhere. To the contrary, if there is not a persistent first-mover advantage, then there can exist many different types of equilibria. One quite general class for which we can use Proposition \ref{prop:eqlL>F} is when the leader's payoff tends to increase in expectation, i.e., when \(L^i\) is a submartingale for each player \(i=1,2\). Then no player wants to stop where \(F^i>L^i\), so stopping results only from preemption.\footnote{
See Theorem \ref{thm:Lsubmart}. For the limits of this logic, however, see Section \ref{subsec:jump}.
}

\subsection{General issues with preemption equilibria}\label{subsec:issupreem}

Such ``purely preemptive'' equilibria are important in the strategic real options literature; a simple deterministic example is shown in Figure \ref{fig:asympreemp}. A number of papers using in fact stochastic models argue that in equilibrium player 1 becomes leader at \(\tau^\cP\), where player 2 becomes follower.\footnote{
See, e.g., \cite{Weeds02}, \cite{PawlinaKort06}, \cite{MasonWeeds10}.
}

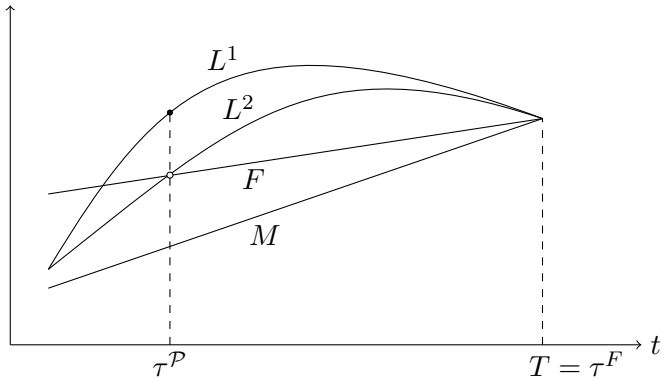
\begin{figure}[ht]
  \centering
  \begin{tikzpicture}[inner sep=0pt,minimum size=0pt,label distance=3pt]
    \draw[->] (0,-1) -- (8.3,-1) node[label=right:$t$] {}; 
    \draw[->] (0,-1) -- (0,3.5) node[] {}; 
    
    \draw[-] (.5,0) .. controls (2,2.5) and (3,3.5) .. (7,2) [] {};
    \draw[-] (.5,0) .. controls (3,2) and (4,3) .. (7,2) [] {};
    \draw[-] (.5,1) -- (7,2) [] {};
    \draw[-] (.5,-.25) -- (7,2) []{};
    
    \node at (2.8,2.8) [] {$L^1$};
    \node at (3,2.15) [] {$L^2$};
    \node at (3.2,1.2) [] {$F$};
    \node at (3.35,.45) [] {$M$};

    \filldraw[fill=white,draw=black] (2.1,1.25) circle (.04);
    \fill[black] (2.1,2.08) circle (.04);
    
    \draw[dashed] (2.1,-1) -- (2.1,2.1) [] {};
    \node at (2.1,-1.25) {$\tau^\cP$};       
    \draw[dashed] (7,-1) -- (7,2) [] {};
    \node at (7.45,-1.25) {$T=\tau^F$}; 
        
  \end{tikzpicture}
  \caption{Preemption with asymmetric leader payoffs}
  \label{fig:asympreemp}
\end{figure}

Stopping must occur no later than at \(\tau^\cP\) in equilibrium, because the players would try to preempt each other where both have a strict first mover-advantage \(L^i>F^i\). In this deterministic example it also seems clear that no player wants to stop at any \(t<\tau^\cP\), because the payoffs keep increasing. There are two general issues in supporting ``stopping at \(\tau^\cP\)'' as an equilibrium. 

First, for stopping really to occur at \(\tau^\cP\), player 1 must not be able to realize a further increase in \(L^1\). Exploiting the increasing payoff can only be prevented by a (credible) threat of player 2 to stop sufficiently quickly after \(\tau^\cP\) if player 1 does not stop. There is no such threat in the mentioned papers. There, the follower still has an option to stop (when to enter a market given that the rival has already entered), \(F\) is the corresponding value function, and the solution is to stop at some later time \(\tau^F\) (when the market has sufficiently grown to sustain duopoly), say here at the terminal time \(T\). That player 2 stops at \(\tau^F\) cannot be an equilibrium strategy. It can only be the \emph{outcome} of both players trying to stop at or immediately after \(\tau^\cP\) if no one has stopped before, player 1 succeeding and player 2 reacting to the changed history by waiting until \(\tau^F\).

We model a proper game theoretic equilibrium of preemption on \((\tau^\cP,T)\) with the strategies given in Proposition \ref{prop:eqlL>F}, such that the game ends immediately at any point in this region, even if some player deviates unilaterally. Both players are also willing to stop immediately because the extensions \(\alpha^\vartheta_i\) allow to control the probability of simultaneous stopping. The identity of who stops first (1, 2 or both) determines the entire outcome, the probabilities of which depend on the relation between \(L^i\), \(F^i\) and \(M^i\), \(i=1,2\) (through the equilibrium strategies \(\alpha^\vartheta_i\)). Any eventual follower's stopping decision is encoded in \(F^i\).

Second, on \([0,\tau^\cP)\) the players must be willing to wait until \(\tau^\cP\). In the deterministic example, the players do not want to stop at any \(t<\tau^\cP\) because the payoffs keep increasing. For waiting \emph{until} \(\tau^\cP\) to be an equilibrium, however, (i) player 1 must also be sure to become leader and (ii) player 2 must be sure that there is no possibility of simultaneous stopping. We support exactly that outcome in Proposition \ref{prop:eqlL>F}.\footnote{
For this outcome we cannot require right-continuity of the \(\alpha^{t}_i(\cdot)\) at \(\tau^\cP\). Note that for any \(t\leq\tau^\cP\), \(\alpha^{t}_1(\tau^\cP+)=\lim_{u\searrow\tau^\cP}\indi{L^1_u>F^1_u}\indi{L^2_u>F^2_u}\frac{L^2_u-F_u}{L^2_u-M_u}=0\) and \(\alpha^{t}_2(\tau^\cP+)=\lim_{u\searrow\tau^\cP}\indi{L^1_u>F^1_u}\indi{L^2_u>F^2_u}\frac{L^1_u-F_u}{L^1_u-M_u}>0\). Defining both \(\alpha^{t}_i(\tau^\cP)\) by right-continuity would imply player 2 to become leader and player 1 to become follower for sure.
}
Otherwise each player would want to stop ever more closely before \(\tau^\cP\), which cannot be an equilibrium. 

In the typical stochastic models, the state of a Markovian process takes the role of time in the deterministic example. Then, however, it is not at all obvious whether the players really want to wait until the preemption region is reached, even if one has a similar ordering of the payoff ``functions'' \(L^i\) and \(F^i\) on the state space.\footnote{
In particular the argument that \(F^i>L^i\) is pointless if the opponent does not intend to stop.
}
The state does not proceed as linearly as time, so one needs to study the related stopping problems.

\subsection{Subgame-perfect preemption equilibria}\label{subsec:SGPpreem}

A reasonable stochastic analogue of a deterministic increasing leader payoff is assuming \(L^i\) to increase in expectation, i.e., to be a submartingale at least outside the preemption region. Then we get a very general existence result, without any particular assumption on the underlying stochastics. Continuity is however an important property; cf.\ the example in Section \ref{subsec:jump}. An alternative argument to obtain such ``purely preemptive'' equilibria is presented in Section \ref{subsec:sympreemp}, based on \(F^i\) being indeed the value process of some follower's stopping problem.

\begin{theorem}\label{thm:Lsubmart}
Assume that each \(L^i\) is a submartingale and that \(L^i\) and \(F^i\) are a.s.\ continuous, \(i=1,2\). Then there exists a subgame-perfect equilibrium \(\bigl(G_1,\alpha_1\bigr)\), \(\bigl(G_2,\alpha_2\bigr)\) with \(\alpha^\vartheta_i\) given by Proposition \ref{prop:eqlL>F} and \(G^\vartheta_i=\indi{t\geq\tau^\cP(\vartheta)}\) for all \(\vartheta\in\T\) and \(i=1,2\).

The resulting payoffs are 
\begin{equation*}
V_i^\vartheta(G_i^\vartheta,\alpha^\vartheta_i,G_j^\vartheta,\alpha^\vartheta_j)=\begin{cases}
E\bigl[L^i_{\tau^\cP(\vartheta)}\bigm|\F_\vartheta\bigr] &\text{if }\{\vartheta<\tau^\cP(\vartheta)\}\text{ or }L^j_\vartheta=F^j_\vartheta,\\
F^i_\vartheta &\text{else}.
\end{cases}
\end{equation*}
\end{theorem}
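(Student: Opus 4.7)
My proof plan is to verify three things in turn: (i) the proposed processes are time-consistent extended mixed strategies; (ii) on $\{\vartheta=\tau^\cP(\vartheta)\}$ they inherit equilibrium from Proposition~\ref{prop:eqlL>F}; (iii) on $\{\vartheta<\tau^\cP(\vartheta)\}$ they form an equilibrium with payoff $E[L^i_{\tau^\cP(\vartheta)}\mid\F_\vartheta]$.

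For (i), the process $G^\vartheta_i=\indi{t\geq\tau^\cP(\vartheta)}$ is visibly a valid adapted, right-continuous distribution function vanishing before $\vartheta$, and $\alpha^\vartheta_i$ is inherited from Proposition~\ref{prop:eqlL>F}. The feasibility requirement $\alpha^\vartheta_i(t)>0\Rightarrow G^\vartheta_i(t)=1$ holds because $\alpha^\vartheta_i$ is positive only where $t=\tau^\cP(t)$, which forces $\tau^\cP(\vartheta)\leq t$. For time consistency with $\vartheta\leq\vartheta'$, the case $\tau^\cP(\vartheta)<\vartheta'$ is trivial (since $G^\vartheta_i(\vartheta'-)=1$ on $\{\vartheta'\leq t\}$), while $\tau^\cP(\vartheta)\geq\vartheta'$ gives $\tau^\cP(\vartheta')=\tau^\cP(\vartheta)$ so both strategies agree. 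Step (ii) applies Proposition~\ref{prop:eqlL>F} verbatim; its payoff $F^i_\vartheta\indi{L^j_\vartheta>F^j_\vartheta}+L^i_\vartheta\indi{L^j_\vartheta=F^j_\vartheta}$ matches the theorem's formula since $\tau^\cP(\vartheta)=\vartheta$ in this regime.

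For the equilibrium payoff in step (iii), note that $G^\vartheta_j$ and $\alpha^\vartheta_j$ are both identically zero on $[\vartheta,\tau^\cP(\vartheta))$, so the subgame effectively ``restarts'' at $\tau^\cP(\vartheta)$, where step (ii) applies. Continuity of $L^i-F^i$ together with openness of the preemption region $\{(L^1-F^1)\wedge(L^2-F^2)>0\}$ forces $(L^1-F^1)\wedge(L^2-F^2)=0$ at $\tau^\cP(\vartheta)$ on the event $\{\vartheta<\tau^\cP(\vartheta)<\infty\}$. Running through the possible boundary configurations (exactly one of $L^i-F^i$ vanishes, or both do) together with the prescriptions from Proposition~\ref{prop:eqlL>F}, and using the fourth case of Definition~\ref{def:outcome} to handle the doubly degenerate configuration (where $\mu_M(0,0)=0$ forces $\lambda^\vartheta_M=0$, and the remaining mass is split between leader and follower payoffs that coincide because $L^i=F^i$ there), one verifies that each player $i$ receives exactly $L^i_{\tau^\cP}$ at the preemption time. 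The tower property then yields $V_i^\vartheta=E[L^i_{\tau^\cP(\vartheta)}\mid\F_\vartheta]$.

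The main obstacle, and the heart of the proof, is ruling out profitable deviations $(G^\vartheta_a,\alpha^\vartheta_a)$ for player $i$. Against the opponent's waiting strategy, every contribution to player $i$'s payoff arises from one of three sources: stopping as sole leader at some $s\in[\vartheta,\tau^\cP(\vartheta))$ (worth $L^i_s$); simultaneous stopping at $\tau^\cP(\vartheta)$ (worth $M^i_{\tau^\cP}$); or ceding leadership to player $j$ at or after $\tau^\cP(\vartheta)$ (worth $F^i_{\tau^\cP}$). Because $L^i_{\tau^\cP}\geq F^i_{\tau^\cP}\geq M^i_{\tau^\cP}$, every contribution is dominated by $L^i$ evaluated at some (randomized) time $\sigma\leq\tau^\cP(\vartheta)$. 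Decomposing the payoff in Definition~\ref{def:payoffs_extended} along the randomized stopping distribution induced by $(G^\vartheta_a,\alpha^\vartheta_a)$ and applying optional sampling to the class-(D) submartingale $L^i$ gives the bound $E[L^i_\sigma\mid\F_\vartheta]\leq E[L^i_{\tau^\cP(\vartheta)}\mid\F_\vartheta]$, so the deviation cannot improve on the equilibrium payoff. The delicate technical step is combining the pathwise integrals of Definition~\ref{def:payoffs_extended} with the terminal outcome probabilities of Definition~\ref{def:outcome} in a single Fubini-type computation and using class-(D) integrability to justify the exchange of integrals and conditional expectations.
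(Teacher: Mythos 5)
Your proposal is correct and follows essentially the same route as the paper's proof: admissibility and time-consistency, invoking Proposition \ref{prop:eqlL>F} at \(\tau^\cP(\vartheta)\), using continuity of \(L^i-F^i\) to identify the continuation value at the boundary with \(L^i_{\tau^\cP(\vartheta)}\), and bounding any deviation by an optimal stopping problem for the class-(D) submartingale \(L^i\) stopped at \(\tau^\cP(\vartheta)\) (the Fubini/change-of-variable step you flag is exactly Lemma \ref{lem:LdG}), which optional sampling settles in favour of waiting. The only cosmetic difference is that you dominate the possible outcomes at \(\tau^\cP(\vartheta)\) (follower, simultaneous, and also sole-leader, which your three-source list omits but which is likewise bounded by \(L^i_{\tau^\cP(\vartheta)}\)) directly, rather than routing through the continuation equilibrium value from Proposition \ref{prop:eqlL>F} as the paper does; the resulting estimate is the same.
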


\noindent
{\it Proof:} See appendix.

\begin{remark}
It would suffice that each \(L^i\) is a semimartingale, hence of the form \(L^i=M^i+A^i\) with a martingale \(M^i\) and a finite-variation process \(A^i\) (e.g., \(L^i\) is a diffusion) and that each \(A^i\), which inherits continuity from \(L^i\), is non-decreasing outside the preemption region \(\{(L^1-F^1)\wedge(L^2-F^2)>0\}\).
\end{remark}

\section{Illustrative examples}\label{sec:exm}

In the following we present a few examples to illustrate the equilibrium concept developed in this paper. Section \ref{subsec:grab} shows the workings of extended mixed strategies in a simple asymmetric game. Section \ref{subsec:sympreemp} presents the standard model from the theory of investment under uncertainty and derives mixed strategies to support equilibrium outcomes proposed for similar models in the literature as subgame-perfect equilibria. Finally, Section \ref{subsec:jump} gives an example showing that the logic used in Theorem \ref{thm:Lsubmart} is sensitive to jumps.

\subsection{Example: the cross-country grab-the-dollar-game}\label{subsec:grab}

We illustrate our definition of equilibrium with a stochastic version of the grab-the-dollar game. It serves to illustrate why it is important to allow for adapted (instead of $\F_\tau$-measurable) strategies in the subgames starting at some stopping time $\tau$. With stochastic payoffs, it is generally impossible to fix one's own strategy independently of the development of the state variables.

In our game, we consider an American and her European friend playing the grab-the-dollar game. When the European wins the dollar, he has to turn it into Euros, at a stochastic exchange rate given by an adapted, right-continuous process $X_t>0$. If both players grab the dollar at the same time, they pay a penalty of $1$ currency unit in their local currency.

Let $0<\exp(-r)<1$ be the discount factor for both players.
The payoffs are thus $L^1_t=\exp(-rt)$ for the American and $L^2_t=X_t \exp(-rt)$ for the European if they win the dollar, respectively. As in the usual grab-the-dollar game, $F^i=0$, and we set $M^1_t=M^2_t=-\exp(-rt)$  for the simultaneous entry payoffs.

\begin{proposition}
  A subgame-perfect equilibrium is given by the strategies $G^\vartheta_i(t) = 1_{\{t \ge \vartheta\}}, i=1,2$ (``grab immediately'') and
\begin{flalign*}  
&& \alpha^\vartheta_1(t) &= \frac{X_t}{1+X_t}\,  1_{\{t\ge\vartheta\}} && \\
&\text{and} \\
&& \alpha^\vartheta_2(t) &= \frac{1}{2}\, 1_{\{t\ge\vartheta\}} && 
\end{flalign*}
for all stopping times $\vartheta\in\T$ and $t\ge 0$.
\end{proposition}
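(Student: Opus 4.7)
The plan is to recognise that this game lies entirely in the preemption region, so that the proposition reduces to a direct application of Proposition~\ref{prop:eqlL>F} at every stopping time~$\vartheta$, together with a routine time-consistency check.

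First I would identify the preemption region. Since $X_t>0$ by assumption and $e^{-rt}>0$, one has $L^1_t-F^1_t=e^{-rt}>0$ and $L^2_t-F^2_t=X_t e^{-rt}>0$ for all $t\ge 0$ a.s., so $\{(L^1-F^1)\wedge(L^2-F^2)>0\}$ is the entire time axis. Consequently $\tau^\cP(\vartheta)=\vartheta$ a.s.\ for every $\vartheta\in\T$, and the hypothesis of Proposition~\ref{prop:eqlL>F} is satisfied at each $\vartheta$. Since $L^i>F^i$ strictly for both~$i$, the ``special'' branch in Proposition~\ref{prop:eqlL>F} never triggers, and the ``else''~branch gives
\[
\alpha^\vartheta_1(t)=\frac{L^2_t-F^2_t}{L^2_t-M^2_t}=\frac{X_t e^{-rt}}{X_t e^{-rt}+e^{-rt}}=\frac{X_t}{1+X_t},\qquad \alpha^\vartheta_2(t)=\frac{L^1_t-F^1_t}{L^1_t-M^1_t}=\frac{e^{-rt}}{2e^{-rt}}=\frac{1}{2},
\]
on $\{t\ge\vartheta\}$, matching the formulas in the statement.

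Next I would check that $(G_i^\vartheta,\alpha_i^\vartheta)$ is a valid extended mixed strategy: $G^\vartheta_i=\indi{\{t\ge\vartheta\}}$ is adapted, right-continuous, non-decreasing and zero on $[0,\vartheta)$; $\alpha^\vartheta_i$ is progressively measurable as the product of an adapted right-continuous process with the indicator of a stochastic interval, takes values in $(0,1]$ on $[\vartheta,\infty)$ with the required right-continuity, and the implication ``$\alpha^\vartheta_i(t)>0\Rightarrow G^\vartheta_i(t)=1$'' is immediate from the indicators. Time consistency in the sense of Definition~\ref{def:TC_extended} is also immediate: for $\vartheta\le\vartheta'$ the formulas for $G^\vartheta_i$ and $\alpha^\vartheta_i$ coincide with those for $\vartheta'$ on $[\vartheta',\infty)$ (the jump of $G^\vartheta_i$ at $\vartheta$ is ``absorbed'' by $G^\vartheta_i(\vartheta'-)=1$ on $\{\vartheta<\vartheta'\}$, and on $\{\vartheta=\vartheta'\}$ the strategies simply agree).

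Finally, applying Proposition~\ref{prop:eqlL>F} at each $\vartheta\in\T$ yields that $\bigl(G_1^\vartheta,\alpha_1^\vartheta\bigr),\bigl(G_2^\vartheta,\alpha_2^\vartheta\bigr)$ is an equilibrium in the subgame starting at $\vartheta$, and combined with time consistency this is precisely subgame perfection. There is no substantive obstacle: the crucial observation is simply that the strict first-mover advantage is global, after which everything reduces to matching constants and bookkeeping. The only mild subtlety is the measurability of $\alpha^\vartheta_1$, which is guaranteed by the assumed adaptedness and right-continuity of the exchange rate process~$X$.
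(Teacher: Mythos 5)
Your proposal is correct and follows exactly the paper's route: the paper also justifies this proposition as a direct application of Proposition \ref{prop:eqlL>F} (noting that $L^i>F^i$ everywhere, so the preemption region is the whole time axis and $\tau^\cP(\vartheta)=\vartheta$), with feasibility and time-consistency left as straightforward verifications, which you carry out explicitly. Your computation of the indifference probabilities $\alpha^\vartheta_1=X_t/(1+X_t)$ and $\alpha^\vartheta_2=1/2$ from the formula $(L^j-F^j)/(L^j-M^j)$ matches the paper's intended argument.
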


This equilibrium is an application of Proposition \ref{prop:eqlL>F}. Feasibility and time-consistency of the strategies are straightforward to verify.

\subsection{Example: preemptive market entry}\label{subsec:sympreemp}

Now we present a typical example of a strategic real option exercise problem and show how to derive subgame-perfect equilibrium strategies for similar models. 

Two firms \(i=1,2\) have the opportunity to invest irreversibly in the same market. The return from investment, in particular the profit flow from operating in the market, is uncertain. Assume the duopoly profit flow when both firms have invested is given by the stochastic process $X=(X_t)_{t\geq 0}$. If only one firm is present in the market it can realize a monopoly markup and increase the profit flow to \(MX_t\) for some constant \(M>1\). To obtain possibly explicit results and because this is the most familiar model in the literature we let $X$ be a geometric Brownian motion; the idea of proof will apply to more general processes, though. Hence, we assume \(X\) is the unique strong solution to the stochastic differential equation
\begin{equation*}
\frac{dX}{X}=\mu\,dt+\sigma\,dB
\end{equation*}
with given initial value \(X_0=x\in\R_+\), where \(B\) is a Brownian motion and \(\mu\), \(\sigma\) some constants. There is a sunk cost of investment \(I>0\). We assume that profits are discounted at a common and constant rate \(r>\max(\mu,0)\), which ensures integrability of our payoff processes and finiteness of the subsequent stopping problems.\footnote{
With \(r>\max(\mu,0)\), \((e^{-rt}X_t)\) is bounded by an integrable random variable. Indeed, for \(\sigma>0\) we have \(\sup_te^{-rt}X_t=X_0e^{\sigma Z}\) with \(Z=\sup_tB_t-t(r-\mu+\sigma^2/2)/\sigma\), which is exponentially distributed with rate \(2(r-\mu)/\sigma+\sigma\) (see, e.g., \cite{RevuzYor}, Exercise (3.12) $4^\circ$). Thus, \(E[\sup_te^{-rt}X_t]=X_0(1+\sigma^2/2(r-\mu))\in\R_+\), implying that \((e^{-rt}X_t)\) is of class (D); analogously for \(\sigma<0\).
}

If firm \(i\) invests at time \(t\) as the leader, the other firm that becomes follower still has the option to invest at some later time. The follower's payoff process \(F\) is thus the value function of the optimal stopping problem
\begin{flalign*}
&& F_t:={}&\sup_{\tau\geq t}E\biggl[\int_\tau^\infty e^{-rs}(X_s-rI)\,ds\biggm|\F_t\biggr] &&\nonumber\\
\nonumber\\
&& ={}&\begin{cases}
\displaystyle e^{-rt}\left(\frac{X_t}{x^F}\right)^{\beta_1}\biggl(\frac{x^F}{r-\mu}-I\biggr)\quad\text{ if }X_t<x^F\\
\\
\displaystyle e^{-rt}\biggl(\frac{X_t}{r-\mu}-I\biggr)\quad\text{ else}.
\end{cases} &&
\end{flalign*}
The explicit solution is a standard result where the investment threshold for the state process \(X\) is given by
\begin{equation*}
x^F=\frac{\beta_1}{\beta_1-1}(r-\mu)I>rI,
\end{equation*}
and \(\beta_1>1\) is the positive root of the quadratic equation \(0=\frac{1}{2}\sigma^2\beta(\beta-1)+\mu\beta-r\).

If the leader invests at any stopping time \(\vartheta\), we denote the optimal investment time of the follower by
\begin{equation*}
\tau^F(\vartheta)=:\inf\{s\geq\vartheta\mid X_s\geq x^F\},
\end{equation*}
whence the leader enjoys the monopoly profit only on \([\vartheta,\tau^F(\vartheta))\). Thus the leader's payoff process \(L\) is given by
\begin{flalign*}
&& L_t:={}&E\biggl[\int_t^{\tau^F(t)}e^{-rs}(MX_s-rI)\,ds+\int_{\tau^F(t)}^\infty e^{-rs}(X_s-rI)\,ds\biggm|\F_t\biggr] &&\nonumber\\
\nonumber\displaybreak[0]\\
&& ={}&\begin{cases}
\displaystyle e^{-rt}\left(\frac{MX_t}{r-\mu}-I+\left(\frac{X_t}{x^F}\right)^{\beta_1}\biggl(\frac{x^F(1-M)}{r-\mu}\biggr)\right)\quad\text{ if }X_t<x^F\\
\\
\displaystyle F_t\quad\text{ else}.
\end{cases} &&
\end{flalign*}
Finally, the payoff from simultaneous investment is simply given by
\begin{flalign*}
&& M_t:={}&E\biggl[\int_t^\infty e^{-rs}(X_s-rI)\,ds\biggm|\F_t\biggr] &&\\
\\
&& ={}&e^{-rt}\left(\frac{X_t}{r-\mu}-I\right), &&
\end{flalign*}
i.e., \(M_t=F_t=L_t\) whenever \(X_t\geq x^F\). 

In order to determine when there is a first- or second-mover advantage, we can rely on the strong Markov property and identify the corresponding regions of the state space of the process \(X\): there exists a unique \(x^P\in(0,x^F)\) such that\footnote{\label{fn:xP}
Consider time 0, recalling that \(X_0=x\in\R_+\). We express the dependence on the starting value by writing \(L_0=L(x)\) and \(F_0=F(x)\). Then \(L(x)-F(x)\to-I<0\) for \(x\to 0\). Further,
\begin{equation*}
\frac{\partial(L(x)-F(x))}{\partial x}=\frac{M}{r-\mu}+\frac{\beta_1}{x^F}\left(\frac{x}{x^F}\right)^{\beta_1-1}\biggl(I-\frac{Mx^F}{r-\mu}\biggr).
\end{equation*}
The term in the last parentheses is strictly negative by \(M>1\). Thus the displayed derivative is strictly decreasing in \(x\), starting at \(M/(r-\mu)>0\) for \(x=0\) and ending at \((1-M)(\beta_1-1)/(r-\mu)<0\) for \(x=x^F\), where \(L(x^F)=F(x^F)\). Hence there exists a unique \(x^P\in(0,x^F)\) such that \(L(x)-F(x)<0\) iff \(x\in[0,x^P)\) and \(L(x)-F(x)>0\) iff \(x\in(x^P,x^F)\).
} 
\begin{flalign*}
\begin{cases}
L_t<F_t\quad\text{ iff }X_t\in[0,x^P),\\
\\
L_t>F_t\quad\text{ iff }X_t\in(x^P,x^F).
\end{cases}
\end{flalign*}
Consequently, the interval \(\cP=(x^P,x^F)\) is the preemption region for the driving process \(X\). On \(\{X\in\cP\}\) we have equilibria of immediate stopping with coordination by extended mixed strategies following Proposition \ref{prop:eqlL>F}, resulting in the immediate payoffs \(F\). Let 
\[
\tau^\cP(\vartheta):=\inf\{s\geq\vartheta\mid X_s\in\cP\}
\] 
denote the hitting time of the preemption region after any stopping time \(\vartheta\in\T\). 

On \(\{X_\vartheta<x^P\}\) we have \(L_\vartheta<F_\vartheta=E[F_{\tau^\cP(\vartheta)}|\F_\vartheta]=E[L_{\tau^\cP(\vartheta)}|\F_\vartheta]\), since \(F\) is a martingale in its continuation region up to \(\tau^F(\vartheta)>\tau^\cP(\vartheta)\). From this observation one can prove that stopping is dominated on \([\vartheta,\tau^\cP(\vartheta))\) where \(X_\vartheta<x^P\). In particular this is obvious if the opponent does not stop on that interval, because then each player's stopping problem has at least the value \(E[F_{\tau^\cP(\vartheta)}|\F_\vartheta]=E[L_{\tau^\cP(\vartheta)}|\F_\vartheta]\).\footnote{
This value obtains for player \(i\) from stopping at \(\tau^\cP(\vartheta)\) if \(\Delta G^\vartheta_j(\tau^\cP(\vartheta))(F_{\tau^\cP(\vartheta)}-M_{\tau^\cP(\vartheta)})=0\), from stopping after \(\tau^\cP(\vartheta)\) if \(\Delta G^\vartheta_j(\tau^\cP(\vartheta))=1\), or otherwise from the limit of stopping at \(\tau^\cP(\vartheta)+\varepsilon\) where \(\varepsilon\searrow 0\). In the last case the probability of simultaneous stopping converges to 0 since any \(G^\vartheta_j\) of the opponent needs to be right-continuous, and (right-)continuity of \(L\) induces the claimed limit value (which will however not be \emph{attained} by stopping at \(\tau^\cP(\vartheta)\), due to \(\Delta G^\vartheta_j(\tau^\cP(\vartheta))(F_{\tau^\cP(\vartheta)}-M_{\tau^\cP(\vartheta)})>0\)).
}

Finally, let \(\cM=[x^F,\infty)\), so that \(M_t=F_t\) iff \(X_t\in\cM\). On \(\{X\in\cM\}\) we have equilibria of simultaneous stopping. In fact, given that the preemption payoffs on \(\{X\in\cP\}\) are \(F\), stopping is even strictly dominant on \(\{X\in\cM\}\), because then the drift of the supermartingale F is the strictly negative forgone revenue \(-e^{-rt}(X_t-rI)\,dt\). We can summarize as follows, letting \(\tau^\cM(\vartheta):=\inf\{s\geq\vartheta\mid X_s\in\cM\}\) denote the hitting time of \(\cM\).

\begin{proposition}\label{prop:eqlpreemp}
There exists a subgame-perfect equilibrium for the present preemption example with extended mixed strategies \(\bigl(G_1,\alpha_1\bigr)\) and \(\bigl(G_2,\alpha_2\bigr)\) given as follows.  For any \(i=1,2\) and \(\vartheta\in\T\) set
\begin{flalign*}
&& \alpha^\vartheta_i(t)&=\begin{cases}\displaystyle
\frac{L_t-F_t}{L_t-M_t} & \text{if}\quad L_t>F_t\Leftrightarrow X_t\in\cP=(x^P,x^F)\\
\\
1 & \text{if}\quad F_t=M_t\Leftrightarrow X_t\in\cM=[x^F,\infty)\\
\\
0 & \text{else}
\end{cases} &&\\
\text{and}\\
&& G^\vartheta_i(t)&=\indi{t\geq\tau^\cP(\vartheta)\wedge\tau^\cM(\vartheta)} &&
\end{flalign*}
for any \(t\in[\vartheta,\infty]\).

The resulting payoffs are \(V_i^\vartheta(G_i^\vartheta,\alpha^\vartheta_i,G_j^\vartheta,\alpha^\vartheta_j)=E\bigl[F_{\tau^\cP(\vartheta)\wedge\tau^\cM(\vartheta)}\bigm|\F_\vartheta\bigr]\).
\end{proposition}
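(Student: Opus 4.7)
The plan is to verify the three requirements from Definitions \ref{def:strat}, \ref{def:TC_extended}, and \ref{def:SPE_extended}: that $(G_i,\alpha_i)$ form feasible extended mixed strategies, that they are time-consistent as families over $\vartheta$, and that they satisfy the equilibrium inequality in every subgame $\Gamma^\vartheta$. Feasibility is immediate: $G^\vartheta_i=\indi{t\geq\tau^\cP(\vartheta)\wedge\tau^\cM(\vartheta)}$ is adapted and right-continuous, while $\alpha^\vartheta_i$ depends only on $X_t$ through the continuous functions $L,F,M$ of $X$, hence is progressively measurable and right-continuous where its value lies in $(0,1)$. Time-consistency follows because $\tau^\cP$ and $\tau^\cM$ applied to $\vartheta\leq\vartheta'$ agree pointwise whenever $\vartheta'\geq\tau^\cP(\vartheta)\wedge\tau^\cM(\vartheta)$. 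For the equilibrium inequality I would partition by $X_\vartheta\in[0,x^P)\cup\cP\cup\cM$. On $\{X_\vartheta\in\cP\}$, where $L_\vartheta>F_\vartheta>M_\vartheta$ (the strict inequality $F>M$ following from $X_\vartheta<x^F$), the strategies reduce exactly to those of Proposition \ref{prop:eqlL>F} and deliver the equilibrium with payoff $F_\vartheta$. On $\{X_\vartheta\in\cM\}$ one has $L_\vartheta=F_\vartheta=M_\vartheta$, and $\alpha^\vartheta_i(\vartheta)=1$ produces immediate simultaneous stopping with payoff $M_\vartheta=F_\vartheta$; no deviation is profitable since the three payoffs coincide and the opponent stops with probability one at $\vartheta$.

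The substantive case is $\{X_\vartheta\in[0,x^P)\}$, where continuity of $X$ forces $\tau^\cP(\vartheta)<\tau^\cM(\vartheta)$ and $X_{\tau^\cP(\vartheta)}=x^P$, so in particular $L_{\tau^\cP(\vartheta)}=F_{\tau^\cP(\vartheta)}$. Both equilibrium strategies prescribe a common jump of $G^\vartheta_i$ at $\tau^\cP(\vartheta)$ with $\alpha^\vartheta_1(\tau^\cP(\vartheta))=\alpha^\vartheta_2(\tau^\cP(\vartheta))=0$ but $\alpha^\vartheta_i$ positive immediately after, so $\hat\tau^\vartheta=\tau^\cP(\vartheta)$ falls into the last clause of Definition \ref{def:outcome}. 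By symmetry of the payoff processes, $\alpha^\vartheta_1=\alpha^\vartheta_2$ on $\cP$, and $\mu_L(a,a)=(1-a)/(2-a)\to 1/2$ as $a\searrow 0$ yields $\lambda^\vartheta_{L,i}=\lambda^\vartheta_{L,j}=1/2$ and $\lambda^\vartheta_M=0$. The equilibrium payoff is therefore $E[\tfrac{1}{2}L_{\tau^\cP(\vartheta)}+\tfrac{1}{2}F_{\tau^\cP(\vartheta)}\mid\F_\vartheta]=E[F_{\tau^\cP(\vartheta)}\mid\F_\vartheta]=F_\vartheta$, invoking the martingale property of $F$ on its continuation region $\{X<x^F\}$ together with optional sampling at $\tau^\cP(\vartheta)\leq\tau^F(\vartheta)$.

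It remains to rule out profitable deviations $(G'_i,\alpha'_i)$ in this last case. Since the opponent neither stops nor activates her extension on $[\vartheta,\tau^\cP(\vartheta))$, any mass that player $i$ places at some $\tau<\tau^\cP(\vartheta)$ yields the sole-leader payoff $L_\tau$, and from $L\leq F$ on $[0,x^P)$ together with the martingale property we obtain $E[L_\tau\mid\F_\vartheta]\leq E[F_\tau\mid\F_\vartheta]=F_\vartheta$. Setting $\alpha'_i(\tau^\cP(\vartheta))=1$ against $j$'s isolated jump lands in the first clause of Definition \ref{def:outcome} with $\lambda'_M=1$, yielding only $M_{\tau^\cP(\vartheta)}<F_{\tau^\cP(\vartheta)}$. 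Mass placed strictly after $\tau^\cP(\vartheta)$ leaves $i$ as sole follower with payoff $F_{\tau^\cP(\vartheta)}$, which averages to the equilibrium value. All extended or randomized deviations are convex combinations of these pure choices and are hence bounded by $F_\vartheta$. The main obstacle is the outcome computation at the preemption boundary, where both $\alpha$s vanish and one must invoke the $\liminf/\limsup$ clause of Definition \ref{def:outcome}; the key observation is that continuity of $X$ forces $L=F$ at $\tau^\cP(\vartheta)$, which collapses the $1/2$--$1/2$ leader-follower lottery into the deterministic value $F_{\tau^\cP(\vartheta)}$ that the martingale property of $F$ then pushes back to $F_\vartheta$.
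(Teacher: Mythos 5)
Your overall route is the paper's: split according to $X_\vartheta\in\cM$, $X_\vartheta\in\cP$ (where Proposition \ref{prop:eqlL>F} gives optimality and payoff $F_\vartheta$), and $X_\vartheta<x^P$, where the equilibrium outcome at $\hat\tau^\vartheta=\tau^\cP(\vartheta)$ is computed from the $\liminf/\limsup$ clause of Definition \ref{def:outcome} (symmetry gives the $\tfrac12$--$\tfrac12$ split, $\lambda^\vartheta_M=0$, hence value $F_{\tau^\cP(\vartheta)}$), and deviations are controlled by $L\leq F$ below $x^P$ together with the martingale property of $F$ up to $\tau^F(\vartheta)\geq\tau^\cP(\vartheta)$. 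That is exactly the structure of the paper's proof (which proceeds ``as in Theorem \ref{thm:Lsubmart}, with the martingale property of $F$ replacing the submartingale property of $L$''). Two points, however, deserve attention. First, your outcome computation for the deviation $\alpha'_i(\tau^\cP(\vartheta))=1$ is wrong: since the opponent's extension is positive at all times immediately after $\tau^\cP(\vartheta)$ (when $X\in\cP$), one has $\hat\tau^\vartheta_j=\tau^\cP(\vartheta)=\hat\tau^\vartheta$, so the first clause of Definition \ref{def:outcome} (which requires $\hat\tau^\vartheta<\hat\tau^\vartheta_j$) does not apply; the relevant clause is the third one with $\alpha'_i(\hat\tau^\vartheta)\vee\alpha^\vartheta_j(\hat\tau^\vartheta)=1$, giving $\mu_L(1,0)=1$, i.e.\ the deviator becomes sole leader and receives $L_{\tau^\cP(\vartheta)}=F_{\tau^\cP(\vartheta)}$, not $\lambda'_M=1$ with payoff $M_{\tau^\cP(\vartheta)}$. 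The deviation is still unprofitable, so your conclusion survives, but the step as written misreads the definition.

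Second, the sentence ``all extended or randomized deviations are convex combinations of these pure choices'' is precisely the part that needs proof rather than assertion, and it is where the paper does its work: an arbitrary deviation is a pair $(G^\vartheta_a,\alpha^\vartheta_a)$, and one must (i) argue that activating $\alpha^\vartheta_a$ before $\tau^\cP(\vartheta)$ changes nothing because $\Delta G^\vartheta_j\equiv 0$ there, (ii) bound the continuation from $\tau^\cP(\vartheta)$ onward by the subgame-equilibrium value $F_{\tau^\cP(\vartheta)}$ using time consistency and iterated expectations (this is the inequality $V^\vartheta_i(G^\vartheta_a,\alpha^\vartheta_a,G^\vartheta_j,\alpha^\vartheta_j)\leq V^\vartheta_i(G^\vartheta_a\indi{t<\tau^\cP(\vartheta)}+\indi{t\geq\tau^\cP(\vartheta)},\alpha^\vartheta_i,G^\vartheta_j,\alpha^\vartheta_j)$), and (iii) convert $\int_{[0,\tau^\cP(\vartheta))}L_s\,dG^\vartheta_a(s)$ into an average over quantile stopping times via the change-of-variable Lemma \ref{lem:LdG} and Fubini, so that optional sampling can be applied term by term. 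Without some version of (ii)--(iii) your estimate $E[L_\tau\mid\F_\vartheta]\leq F_\vartheta$ only covers pure deviations. Two smaller blemishes: your partition omits the event $\{X_\vartheta=x^P\}$ (there $\vartheta=\tau^\cP(\vartheta)$ a.s.\ and Proposition \ref{prop:eqlL>F} applies directly, which is how the paper covers all of $\{X_\vartheta\geq x^P\}$), and your time-consistency remark is stated backwards: the nontrivial case is $\vartheta'\leq\tau^\cP(\vartheta)\wedge\tau^\cM(\vartheta)$, where the hitting times computed from $\vartheta$ and $\vartheta'$ coincide, while for $\vartheta'$ beyond that time $G^\vartheta_i(\vartheta'-)=1$ makes the condition of Definition \ref{def:TC_extended} trivial.
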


\noindent
{\it Proof:} See appendix.
\medskip

Fully explicitly, we have 
\begin{equation*}
\alpha^\vartheta_i(t)=\frac{\displaystyle\frac{MX_t}{r-\mu}-I+\left(\frac{X_t}{x^F}\right)^{\beta_1}\biggl(I-\frac{Mx^F}{r-\mu}\biggr)}{\displaystyle\frac{(M-1)X_t}{r-\mu}+\left(\frac{X_t}{x^F}\right)^{\beta_1}\biggl(\frac{(1-M)x^F}{r-\mu}\biggr)},\qquad X_t\in\cP.
\end{equation*}
The extensions \(\alpha^\vartheta_i(\cdot)\) here inherit continuity from the payoff processes.\footnote{
This is clear except for possibly two cases. If \(X_t=x^P\), then \(L_t=F_t\) and \(\lim_{u\to t}\indi{L_u>F_u}(L_u-F_u)/(L_u-M_u)=0\) as \(L_t>M_t\) for \(X_t\not\in\cM\). If \(X_t=x^F=\partial\cM\), then 
\begin{equation*}\lim_{u\to t}\alpha^\vartheta_i(u)=\lim_{u\to t}\bigl[\indi{L_u>F_u}(L_u-F_u)/(L_u-M_u)+\indi{F_u=M_u}\bigr]=1=\alpha^\vartheta_i(t),\end{equation*} 
because \(\lim_{x\nearrow x^F}(L(x)-F(x))/(L(x)-M(x))=1\) by l'H\^opital (with the notation of footnote \ref{fn:xP}, writing also \(M(x)=M_0\) for \(X_0=x\)).
}
As remarked more generally in the context of Proposition \ref{prop:eqlL>F}, each firm eventually becomes leader or follower with probability \(\frac12\) in any subgame that starts with \(X_\vartheta\leq x^P\).

\subsection{Sensitivity to jumps}\label{subsec:jump}

The following simple economic example illustrates the sensitivity of the logic brought forward in Sections \ref{subsec:issupreem}, \ref{subsec:SGPpreem}~-- that stopping is dominated where \(L^i<F^i\) and \(L^i\) is (strictly) increasing in expectation~-- to continuity of the payoff processes. We will show that in \emph{any} equilibrium of the example, stopping occurs strictly before reaching the preemption region (where both players have a first-mover advantage), although the following regularity properties hold: For each \(i=1,2\) the leader payoff process \(L^i\) is a strict submartingale that is further upper-semicontinuous (the usual regularity condition for optimal stopping), and the follower process \(F^i\) is continuous; the preemption region is non-empty with probability 1. We will actually construct subgame-perfect equilibria with immediate stopping at any time.

Consider two rival firms that can each make an investment to diversify to a new product. If only one firm invests, it will take up the new market, while the other then becomes monopolist for the old product. The latter is worth a net present value \(c\) at the time of investment. Initially only firm 1 has developed a profitable technology for switching to the new product, such that it can invest into it any time. The technology keeps improving, however, so the value of capturing the new market is increasing in time, even if discounted to time 0. Firm 2 initially has an inferior technology, such that investing into the new product would only yield it a net present value 1 at the time of investment. However, firm 2 can catch up to the superior technology at the hazard rate \(\lambda>0\), after which it could realize the same profit as firm 1. As usual, simultaneous investment is the worst outcome.

We model the payoff processes as follows:
\begin{alignat*}{2}
L^1&=\bigl(a-e^{-rt}b\bigr)_{t\in[0,\infty)}, &  \quad&a=2+\tfrac{r}{\lambda},\ b=\tfrac{r}{r+\lambda},\\
L^2&=\bigl(e^{-rt}\indi{t<T}+(a-e^{-rt}b\bigr)\indi{t\geq T})_{t\in[0,\infty)}, && \\
F^1&=F^2=\bigl(e^{-rt}c\bigr)_{t\in[0,\infty]}, & &c\in[1,1+b),\\
&\geq M^1,\,M^2.
\end{alignat*}
The choice of the \(M^i\), the payoffs for simultaneous investment, is unimportant as long as there is a (weak) penalty.\footnote{
Also \(F^1\) can be modified, for the argument we only need \(F^1\leq F^2\).
}
\(r>0\) is the fixed discount rate. \(T\) is the random time at which firm 2 catches up, it is exponentially distributed with parameter \(\lambda\) and defined on some stochastic basis \((\Omega,\F,P)\). Assume this is the only uncertainty, i.e., the filtration \(\mathbf{F}=(\F_t)_{t\in[0,\infty)}\) is generated by the process \((\indi{t\geq T})\). 

Notice for the sake of the argument that \(F^2_t>L^2_t\) on \([0,T)\) for \(c>1\). All processes except for \(L^2\) (and \(M^1\), \(M^2\), if we like) are continuous. \(L^2\) is continuous from the right and upper-semicontinuous from the left, because \(a>2>(1+b)\geq e^{-rt}(1+b)\). \(L^1\) is strictly increasing and also \(L^2\) is a strict submartingale.\footnote{\label{fn:Lsubmart}
Fix two times \(0\leq s<t\). On \(\{s\geq T\}\) we have \(E[L^2_t\mid\F_s]-L^2_s=e^{-rs}b-e^{-rt}b>0\). On \(\{s<T\}\),
\begin{align*}
E[L^2_t\mid\F_s]&=e^{-rt}P[T>t\mid T>s]+(a-e^{-rt}b)P[T\leq t\mid T>s]\\
&=e^{-rt}e^{-\lambda(t-s)}+(a-e^{-rt}b)(1-e^{-\lambda(t-s)})
\end{align*}
and thus
\begin{gather*}
\partial E[L^2_t\mid\F_s]/\partial t=-(r+\lambda)e^{-rt}e^{-\lambda(t-s)}(1+b)+a\lambda e^{-\lambda(t-s)}+re^{-rt}b>0\quad\forall t>s\geq 0\\
\Leftrightarrow\quad a\lambda e^{rt}+re^{\lambda(t-s)}b>(r+\lambda)(1+b)\quad\forall t>s\geq 0\\
\Leftrightarrow\quad a\lambda+rb\geq(r+\lambda)(1+b)\\
\Leftrightarrow\quad a\geq(1+b)+\tfrac{r}{\lambda}.
\end{gather*}
Hence, \(E[L^2_t\mid\F_s]>\lim_{u\searrow s}E[L^2_u\mid\F_s]=e^{-rs}=L^2_s\). 
}
The key property for the intended result is that \(L^2\) is strictly decreasing up to \(T\) and exceeds the expected value of becoming follower at \(T\) as we will show, so firm 2 will stop immediately if there is too little chance to realize \(L^2\) after \(T\). 

The preemption region is \(\{(L^1-F^1)\wedge(L^2-F^2)>0\}=\{T\leq t<\infty\}\subset\Omega\times[0,\infty]\) since \(a>b+c\geq e^{-rt}b+e^{-rt}c\). Its hitting time, starting from time 0, is 
\begin{equation*}
\tau^\cP:=\inf\{s\geq 0\mid(L^1_s-F^1_s)\wedge(L^2_s-F^2_s)>0\}=T.
\end{equation*}
Now suppose there is an equilibrium with preemption using extended mixed strategies inside the preemption region, i.e., with continuation payoff \(F^i_\vartheta\) for each firm \(i=1,2\) and any stopping time \(\vartheta>\tau^\cP=T\). Assume that at \(T\), however, the firms can agree on events \(A_1\) and \(A_2\) with \(P[A_1\cap A_2]=0\), such that firm \(i\) even obtains \(L^i_T\) on \(A_i\) by playing \(\alpha_i(T)=1\) and playing \(\alpha_i(T)=0\) on \(A_j\), \(i,j\in\{1,2\}\), \(i\not=j\). Let \(A_i\) in fact be exactly the event where \(\alpha_i(T)=1\), so \(i\)'s payoff on \(A_i^c\) is \(F^i_T\). If there is no stopping on \([0,T)\), then the expected payoffs are 
\begin{flalign*}
&& E[L^1_T\indi{A_1}+F^1_T\indi{A_1^c}]&=E[(a-e^{-rT}b)\indi{A_1}+e^{-rT}c\indi{A_1^c}] &&\\
\text{and}\\
&& E[L^2_T\indi{A_2}+F^2_T\indi{A_2^c}]&\leq E[(a-e^{-rT}b)\indi{A_1^c}+e^{-rT}c\indi{A_1}]. &&
\end{flalign*}
The estimate follows from \(L^2_T>F^2_T\) and \(P[A_2\setminus A_1^c]=P[A_1\setminus A_2^c]=0\). The sum of the expected payoffs is not more than \(a-bE[e^{-rT}]+cE[e^{-rT}]=a-(b-c)\lambda/(r+\lambda)<a-b+1=L^1_0+L^2_0\), contradicting the hypothesized equilibrium.

Now suppose there is an equilibrium with mixed strategies \(G^T_i\) in the preemption region. From \(T\) onwards, the payoff processes are deterministic and continuous. \cite{HendricksWilson92} show that mixed equilibrium strategies must satisfy
\begin{equation*}
\frac{dG^T_i}{1-G^T_i}=\frac{dL^j}{L^j-F^j}=\frac{re^{-rt}b\,dt}{a-e^{-rt}(b+c)},\qquad i,j\in\{1,2\},\,i\not=j,
\end{equation*}
where continuous mixing occurs. We here have \(0<(a-e^{-rt}(b+c))^{-1}\leq(a-b-c)^{-1}<\lambda(r+\lambda)/(r^2+\lambda^2)<\infty\) and thus \(\int_T^\infty(1-G^T_i)^{-1}\,dG^T_i<\infty\), \(i=1,2\), so there would remain some mass for stopping at \(t=\infty\) for both firms. However, \(M^i_\infty\leq F^i_\infty=0<\lim_{t\nearrow\infty}L^i_t=a\) for both \(i=1,2\), so there cannot be joint stopping at \(t=\infty\). 

It follows from Theorems 2 and 3 in \cite{HendricksWilson92} that at \(T\) there only exist equilibria with at least one firm \(i\) stopping immediately, i.e., \(G^T_1(T)\vee G^T_2(T)=1\). A firm that stops receives not more than \(L^i_T\) and the respective other receives \(F^j_T\) (there may be simultaneous stopping if \(F^i_T=M^i_T\), \(i=1,2\)). Now the previous argument based on the sets \(A_i=\{G^T_i(T)=1\}\) applies again, so stopping must occur strictly before \(T\) in equilibrium.

The estimate above extends to any time \(t\in[0,T)\), taking conditional expectations. In fact, there exist the following subgame-perfect equilibria with either firm stopping immediately on \([0,T)\). Set
\begin{equation*}
\alpha^\vartheta_1(t)=\indi{t\geq T}\frac{L^2_t-F^2_t}{L^2_t-M^2_t}\quad\text{and}\quad\alpha^\vartheta_2(t)=\indi{t\geq T}\frac{L^1_t-F^1_t}{L^1_t-M^1_t}
\end{equation*}
for all stopping times \(\vartheta\) and \(t\in[0,\infty)\). Now pick \(i,j\in\{1,2\}\), \(i\not=j\) and set \(G^\vartheta_i=\indi{t\geq\vartheta}\) (stop immediately) and \(G^\vartheta_j=\indi{t\geq\vartheta\vee T}\) (stop at preemption region) for all \(\vartheta\).  On \(\{\vartheta\geq T\}\) there is preemption with payoffs \(F^1_\vartheta\), \(F^2_\vartheta\). On \(\{\vartheta<T\}\), \(j\) cannot deviate profitably because \(M^j\leq F^j\). Firm \(i\) could wait until any \(\tau\geq\vartheta\) to obtain \(L^i_\tau\indi{\tau<T}+F^i_T\indi{\tau\geq T}\). The process \((L^i_t\indi{t<T}+F^i_T\indi{t\geq T})_{t\geq 0}\) is however a strict supermartingale for both \(i=1,2\) with our parametrization.\footnote{
Fix two times \(0\leq s<t\). On \(\{s<T\}\) we have \(E[L^1_t\indi{t<T}+F^1_T\indi{t\geq T}\mid\F_s]=(a-e^{-rt}b)e^{-\lambda(t-s)}\) (for \(F^1\equiv 0\)) and
\begin{equation*}
E[L^2_t\indi{t<T}+F^2_T\indi{t\geq T}\mid\F_s]=e^{-rt}e^{-\lambda(t-s)}+e^{-rs}c\tfrac{\lambda}{r+\lambda}(1-e^{-(r+\lambda)(t-s)})
\end{equation*}
(resp.\ \(E[L^1_t\indi{t<T}+F^1_T\indi{t\geq T}\mid\F_s]\leq (a-e^{-rt}b)e^{-\lambda(t-s)}+e^{-rs}c\tfrac{\lambda}{r+\lambda}(1-e^{-(r+\lambda)(t-s)})\) for any other choice of \(F^1\leq F^2\)). Now one can continue as in footnote \ref{fn:Lsubmart}.
}
Thus, stopping immediately is indeed optimal.

\section{Conclusion}\label{sec:conc}

We have presented a framework for subgame-perfect equilibria of stochastic timing games, where the notion of a subgame is linked to that of a stopping time. The latter are the feasible (random) time quantities to plan an action, and hence also to revise any plan. In continuous time the events that are decidable by stopping times are generally more complex than combinations of events from a collection of deterministic times (in contrast to discrete time).

In many timing games mixed strategies play an important role as we have argued, either for equilibrium existence or to resolve any strategic conflicts (about roles with differing amenities) within the game. Concerning both aspects we have taken up the concept of \cite{FudenbergTirole85} to extend mixed strategies of a more classical sense towards improved consistency with limits from discrete time~-- instead of modifying the underlying game payoffs to circumvent coordination problems (as frequently done by coin tossing, e.g.). The extensions allow to solve coordination problems about roles in equilibrium endogenously and there is generally a price to pay for preemption, in terms of coordination failure. 

The generalization to stochastic models has not been straightforward, however. With necessarily weaker regularity restrictions on strategies, one has to be very careful how to define unique outcomes that are possibly consistent with the motivating limits. With this framework fixed, we obtained a general characterization of equilibria in subgames of preemption type. The main issue is then to verify equilibria in subgames where there is not a first-mover advantage for both players, by solving appropriate stopping problems. We have covered some important cases for applications and thus provided a foundation for the literature on strategic real options, which is often lacking rigorous arguments.

Of course the models where stopping occurs only due to preemption are just a special~-- if important~-- class. We presented an example which seems to follow the logic of that class, but where stopping occurs much earlier to avoid preemption later on. There may also be other forms of stopping, with the players using continuous distribution functions. The paper \cite{StegThijssen14} presents such a case, where the regime of a game in which two players have a switching option moves randomly between first- and second-mover advantages. Then both components of the extended mixed strategies introduced here play important roles.

\newpage
\appendix

\section{Appendix}

\subsection{Proofs}

\begin{proof}[{\bf Proof of Proposition \ref{prop:eqlL>F}}]
By construction, \(G^\vartheta_i\) and \(\alpha^\vartheta_i\) are a.s.\ \nbd{[0,1]}valued. \(G^\vartheta_i\) is right-continuous, non-decreasing, attaining 1 where \(\alpha^\vartheta_i(t)>0\), \(t\geq\vartheta\). \(\alpha^\vartheta_i\) takes values in \((0,1)\) only where \((L^1_t-F^1_t)\wedge(L^2_t-F^2_t)>0\), where it is indeed right-continuous.

\(G^\vartheta_i\) is adapted. \(\alpha^\vartheta_i\) is progressively measurable because we can represent it using the process \((\indi{t=\tau^\cP(t)})\), which is the upper-right-continuous modification of the optional process \((\indi{(L^1_t-F^1_t)\wedge(L^2_t-F^2_t)>0})\) and therefore progressively measurable by Theorem IV.33 (c) in \cite{DellacherieMeyer78}~-- our \(\filt{F}\) satisfies the usual conditions.

By \(\vartheta=\tau^\cP(\vartheta)\) we also have \(\vartheta=\hat\tau^\vartheta_i=\inf\{u\geq\vartheta\mid\alpha^\vartheta_i(u)>0\}\) and \(L^i_\vartheta\geq F^i_\vartheta\) a.s., \(i=1,2\). By our observations about \eqref{alphaindiff} we have indifference where \((L^1_\vartheta-F^1_\vartheta)\wedge(L^2_\vartheta-F^2_\vartheta)>0\), implying payoff \(F^1_\vartheta\), resp.\ \(F^2_\vartheta\), and it only remains to verify that each player \(i\) obtains \(\max\{F^i_\vartheta,\alpha^\vartheta_j(\vartheta)M^i_\vartheta+(1-\alpha^\vartheta_j(\vartheta))L^i_\vartheta\}\) in the cases \(L^1_\vartheta=F^1_\vartheta\) or \(L^2_\vartheta=F^2_\vartheta\). Now fix \(i,j\in\{1,2\}\), \(i\not=j\). Consider first \(L^i_\vartheta=F^i_\vartheta\). If \(L^j_\vartheta>F^j_\vartheta\) then \(\alpha^\vartheta_j(\vartheta)=1\) and \(\alpha^\vartheta_i(\vartheta)=0\) is optimal. If also \(L^j_\vartheta=F^j_\vartheta\) then still \(\alpha^\vartheta_j(\vartheta)=1\) where \(F^i_\vartheta=M^i_\vartheta\), where \(i\) is completely indifferent. Finally, where \(L^j_\vartheta=F^j_\vartheta\) and \(F^i_\vartheta>M^i_\vartheta\), \(\alpha^\vartheta_j(\vartheta)=\alpha^\vartheta_j(\vartheta+)=0\). Then \(\alpha^\vartheta_i(\vartheta)=1\) where \(F^j_\vartheta=M^j_\vartheta\) is optimal, as is \(\alpha^\vartheta_i(\vartheta)=\alpha^\vartheta_i(\vartheta+)=0\) where \(F^j_\vartheta>M^j_\vartheta\), since then \(\lambda^\vartheta_M=0\). In all these cases the payoff is \(F^i_\vartheta=L^i_\vartheta\).

Finally consider \(L^i_\vartheta>F^i_\vartheta\) and \(L^j_\vartheta=F^j_\vartheta\), whence \(\alpha^\vartheta_j(\vartheta)=0\) and thus \(\alpha^\vartheta_i(\vartheta)=1\) is optimal. In this case the payoff is \(L^i_\vartheta\).
\end{proof}

\begin{proof}[{\bf Proof of Theorem \ref{thm:Lsubmart}}]
Admissibility of the strategies for any \(\vartheta\in\T\) is obtained as in the proof of Proposition \ref{prop:eqlL>F} and time-consistency is obvious. For any \(\vartheta\in\T\) and \(i=1,2\), \(\bigl(G^\vartheta_i,\alpha^\vartheta_i\bigr)\) is also admissible for the subgame starting at \(\tau^\cP(\vartheta)\) and Proposition \ref{prop:eqlL>F} shows that \(\bigl(G^\vartheta_1,\alpha^\vartheta_1\bigr)\) and \(\bigl(G^\vartheta_2,\alpha^\vartheta_2\bigr)\) are mutual best replies at \(\tau^\cP(\vartheta)\). For \(\{\vartheta=\tau^\cP(\vartheta)\}\) this directly implies optimality. For \(\{\vartheta<\tau^\cP(\vartheta)\}\) and any admissible \(\bigl(G^\vartheta_a,\alpha^\vartheta_a\bigr)\), time consistency and iterated expectations yield the estimate
\begin{align*}
&V^\vartheta_i\bigl(G^\vartheta_a,\alpha^\vartheta_a,G^\vartheta_j,\alpha^\vartheta_j\bigr)\\
&\leq V^\vartheta_i\bigl(G^\vartheta_a\indi{t<\tau^\cP(\vartheta)}+\underbrace{\indi{t\geq\tau^\cP(\vartheta)}}_{\mathrlap{\displaystyle=\indi{t\geq\tau^\cP(\vartheta)}\Bigl[G^\vartheta_a(\tau^\cP(\vartheta)-)+(1-G^\vartheta_a(\tau^\cP(\vartheta)-))G^\vartheta_i\Bigr]}},\alpha^\vartheta_a\indi{t<\tau^\cP(\vartheta)}+\alpha^\vartheta_i\indi{t\geq\tau^\cP(\vartheta)},G^\vartheta_j,\alpha^\vartheta_j\bigr),
\end{align*}
\(i,j\in\{1,2\}\), \(i\not=j\). Furthermore, since \(\indi{t<\tau^\cP(\vartheta)}\Delta G^\vartheta_j(t)\equiv 0\),
\begin{equation*}
V^\vartheta_i\bigl(G^\vartheta_a,\alpha^\vartheta_a,G^\vartheta_j,\alpha^\vartheta_j\bigr)=V^\vartheta_i\bigl(G^\vartheta_a,\alpha^\vartheta_a\indi{t\geq\tau^\cP(\vartheta)},G^\vartheta_j,\alpha^\vartheta_j\bigr).
\end{equation*}
Together, and using that in fact \(G^\vartheta_j(\tau^\cP(\vartheta)-)=0\), we have
\begin{align*}
&V^\vartheta_i\bigl(G^\vartheta_a,\alpha^\vartheta_a,G^\vartheta_j,\alpha^\vartheta_j\bigr)\\
&\leq V^\vartheta_{i}\bigl(G^\vartheta_a\indi{t<\tau^\cP(\vartheta)}+\indi{t\geq\tau^\cP(\vartheta)},\alpha^\vartheta_i,G^\vartheta_{j},\alpha^\vartheta_j\bigr)\\
&=E\biggl[\int_{[0,\tau^\cP(\vartheta))}L^i_s\,dG^\vartheta_{a}(s)+(1-G^\vartheta_a(\tau^\cP(\vartheta)-))V^{\tau^\cP(\vartheta)}_i\bigl(G^\vartheta_i,\alpha^\vartheta_i,G^\vartheta_j,\alpha^\vartheta_j\bigr)\biggm|\F_\vartheta\biggr]\displaybreak[0]\\
&=\int_0^1 E\biggl[L^i_{\tau^G_a(x)}\indi{\tau^G_a(x)\in[0,\tau^\cP(\vartheta))}\biggm|\F_\vartheta\biggr]\,dx\\
&+\int_0^1 E\biggl[V^{\tau^\cP(\vartheta)}_i\bigl(G^\vartheta_i,\alpha^\vartheta_i,G^\vartheta_j,\alpha^\vartheta_j\bigr)\indi{\tau^G_a(x)\in[\tau^\cP(\vartheta),\infty]}\biggm|\F_\vartheta\biggr]\,dx\\
&\leq\esssup_{\tau\geq\vartheta}E\Bigl[L^i_\tau\indi{\tau<\tau^\cP(\vartheta)}+V^{\tau^\cP(\vartheta)}_i\bigl(G^\vartheta_i,\alpha^\vartheta_i,G^\vartheta_j,\alpha^\vartheta_j\bigr)_i\indi{\tau\geq\tau^\cP(\vartheta)}\Bigm|\F_\vartheta\Bigr],
\end{align*}
where we apply a change of variable as in Lemma \ref{lem:LdG} and Fubini's Theorem in the second last step. At \(\tau^\cP(\vartheta)\), the optimal payoff is fixed at
\begin{equation*}
V^{\tau^\cP(\vartheta)}_i\bigl(G^\vartheta_i,\alpha^\vartheta_i,G^\vartheta_j,\alpha^\vartheta_j\bigr)=F^i_{\tau^\cP(\vartheta)}\indi{L^j_{\tau^\cP(\vartheta)}>F^j_{\tau^\cP(\vartheta)}}+L^i_{\tau^\cP(\vartheta)}\indi{L^j_{\tau^\cP(\vartheta)}=F^j_{\tau^\cP(\vartheta)}}.
\end{equation*} 
If \(L^i\) and \(F^i\) are continuous, \(i=1,2\), then \((L^1_{\tau^\cP(\vartheta)}-F^1_{\tau^\cP(\vartheta)})\wedge(L^2_{\tau^\cP(\vartheta)}-F^2_{\tau^\cP(\vartheta)})=0\) on \(\{\vartheta<\tau^\cP(\vartheta)\}\), a.s. Hence for the given strategies,
\begin{align*}
&V^\vartheta_i\bigl(G^\vartheta_i,\alpha^\vartheta_i,G^\vartheta_j,\alpha^\vartheta_j\bigr)\\
&=E\Bigl[V^{\tau^\cP(\vartheta)}_i\bigl(G^\vartheta_i,\alpha^\vartheta_i,G^\vartheta_j,\alpha^\vartheta_j\bigr)\Bigm|\F_\vartheta\Bigr]\\
&=E\Bigl[L^i_{\tau^\cP(\vartheta)}\indi{L^j_{\tau^\cP(\vartheta)}>F^j_{\tau^\cP(\vartheta)}}+L^i_{\tau^\cP(\vartheta)}\indi{L^j_{\tau^\cP(\vartheta)}=F^j_{\tau^\cP(\vartheta)}}\Bigm|\F_\vartheta\Bigr]\\
&=E\Bigl[L^i_{\tau^\cP(\vartheta)}\Bigm|\F_\vartheta\Bigr]=\esssup_{\tau\geq\vartheta}E\Bigl[L^i_{\tau\wedge\tau^\cP(\vartheta)}\Bigm|\F_\vartheta\Bigr]\\
&=\esssup_{\tau\geq\vartheta}E\Bigl[L^i_\tau\indi{\tau<\tau^\cP(\vartheta)}+V^{\tau^\cP(\vartheta)}_i\bigl(G^\vartheta_i,\alpha^\vartheta_i,G^\vartheta_j,\alpha^\vartheta_j\bigr)\indi{\tau\geq\tau^\cP(\vartheta)}\Bigm|\F_\vartheta\Bigr].
\end{align*}
Therefore player \(i\) has no incentive to change the strategy \(\bigl(G^\vartheta_i,\alpha^\vartheta_i\bigr)\) by placing any mass on \([\vartheta,\tau^\cP(\vartheta))\).
\end{proof}

\begin{proof}[{\bf Proof of Proposition \ref{prop:eqlpreemp}}]
The proof is analogous to that of Theorem \ref{thm:Lsubmart}, except that we use the martingale property of \(F\) in its continuation region \(\{X\in[0,x^F)\}\) instead of a submartingale property of \(L\).\footnote{
\(L\) need not be a submartingale outside the preemption region, depending on the parameter values. In particular one can show that if \(\mu\leq 0\), the drift of \(L\) is strictly negative for all \(M\) sufficiently close to 1 and \(X_t\) sufficiently close to \(x^P\).
}
Concerning the payoffs, fix \(\vartheta\in\T\). If \(X_\vartheta\geq x^P\), then the payoffs to both players are \(V^\vartheta_i\bigl(G^\vartheta_i,\alpha^\vartheta_i,G^\vartheta_j,\alpha^\vartheta_j\bigr)=F_\vartheta\) and optimality follows again from Proposition \ref{prop:eqlL>F}. If \(X_\vartheta<x^P\), i.e., \(\vartheta<\tau^\cP(\vartheta)\), then we replace the estimate in the proof of Theorem \ref{thm:Lsubmart} by
\begin{align*}
&V^\vartheta_i\bigl(G^\vartheta_a,\alpha^\vartheta_a,G^\vartheta_j,\alpha^\vartheta_j\bigr)\\
&\leq V^\vartheta_{i}\bigl(G^\vartheta_a\indi{t<\tau^\cP(\vartheta)}+\indi{t\geq\tau^\cP(\vartheta)},\alpha^\vartheta_i,G^\vartheta_{j},\alpha^\vartheta_j\bigr)\\
&=E\biggl[\int_{[0,\tau^\cP(\vartheta))}L_s\,dG^\vartheta_{i}(s)+(1-G^\vartheta_a(\tau^\cP(\vartheta)-))V^{\tau^\cP(\vartheta)}_i\bigl(G^\vartheta_i,\alpha^\vartheta_i,G^\vartheta_j,\alpha^\vartheta_j\bigr)\biggm|\F_\vartheta\biggr]\\
&=\int_0^1 E\biggl[L_{\tau^G_a(x)}\indi{\tau^G_a(x)\in[0,\tau^\cP(\vartheta))}\biggm|\F_\vartheta\biggr]\,dx\\
&+\int_0^1 E\biggl[V^{\tau^\cP(\vartheta)}_i\bigl(G^\vartheta_i,\alpha^\vartheta_i,G^\vartheta_j,\alpha^\vartheta_j\bigr)\indi{\tau^G_a(x)\in[\tau^\cP(\vartheta),\infty]}\biggm|\F_\vartheta\biggr]\,dx\displaybreak[0]\\
&\leq\int_0^1 E\biggl[F_{\tau^G_a(x)}\indi{\tau^G_a(x)\in[0,\tau^\cP(\vartheta))}\biggm|\F_\vartheta\biggr]\,dx\\
&+\int_0^1 E\biggl[F_{\tau^\cP(\vartheta)}\bigl(G^\vartheta_i,\alpha^\vartheta_i,G^\vartheta_j,\alpha^\vartheta_j\bigr)\indi{\tau^G_a(x)\in[\tau^\cP(\vartheta),\infty]}\biggm|\F_\vartheta\biggr]\,dx\\
&=E\bigl[F_{\tau^\cP(\vartheta)}\bigm|\F_\vartheta\bigr]=V^\vartheta_i\bigl(G^\vartheta_i,\alpha^\vartheta_i,G^\vartheta_j,\alpha^\vartheta_j\bigr).\qedhere
\end{align*}
\end{proof}

\subsection{Technical results}

\begin{lemma}\label{lem:classD}
A measurable process \(X=(X_t)_{t\in\R_+}\) is of class {\rm (D)} iff the set \(\{X_\tau : \tau\text{ a stopping}\) \(\text{time}\}\) is uniformly integrable for any given \(X_\infty\in L^1(P)\).
\end{lemma}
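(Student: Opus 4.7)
The backward direction is trivial: if for some given \(X_\infty\in L^1(P)\) the family \(\{X_\tau:\tau\text{ a stopping time}\}\) is uniformly integrable, then its subfamily indexed only by a.s.\ finite stopping times is uniformly integrable as well, which is exactly the definition of class (D) used in Remark~\ref{rem:XclassD}. Note that the identity of \(X_\infty\) is irrelevant here, since a.s.\ finite stopping times do not touch it.

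For the forward direction, fix an arbitrary \(X_\infty\in L^1(P)\) and an arbitrary stopping time \(\tau\). Split the expectation according to whether \(\tau\) is finite or not:
\begin{equation*}
E\bigl[\abs{X_\tau}\indi{\abs{X_\tau}>K}\bigr]
= E\bigl[\abs{X_\tau}\indi{\abs{X_\tau}>K}\indi{\tau<\infty}\bigr]
+ E\bigl[\abs{X_\infty}\indi{\abs{X_\infty}>K}\indi{\tau=\infty}\bigr].
\end{equation*}
The second summand is dominated by \(E[\abs{X_\infty}\indi{\abs{X_\infty}>K}]\), which does not depend on \(\tau\) and tends to \(0\) as \(K\to\infty\) by integrability of \(X_\infty\). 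For the first summand I would use the truncation \(\tau\wedge n\), which is a bounded (hence a.s.\ finite) stopping time coinciding with \(\tau\) on \(\{\tau\leq n\}\). Then
\begin{equation*}
E\bigl[\abs{X_\tau}\indi{\abs{X_\tau}>K}\indi{\tau\leq n}\bigr]
= E\bigl[\abs{X_{\tau\wedge n}}\indi{\abs{X_{\tau\wedge n}}>K}\indi{\tau\leq n}\bigr]
\leq \sup_{\sigma<\infty\text{ a.s.}} E\bigl[\abs{X_\sigma}\indi{\abs{X_\sigma}>K}\bigr],
\end{equation*}
and the right-hand side is independent of \(\tau\) and of \(n\). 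Letting \(n\to\infty\) and invoking monotone convergence gives the same bound for \(E[\abs{X_\tau}\indi{\abs{X_\tau}>K}\indi{\tau<\infty}]\). Uniform integrability of the class (D) family now yields that this bound tends to \(0\) as \(K\to\infty\).

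Combining the two estimates, \(\sup_\tau E[\abs{X_\tau}\indi{\abs{X_\tau}>K}]\to 0\) as \(K\to\infty\), where the supremum runs over all stopping times (including those taking the value \(\infty\)). This is uniform integrability of the full family. The proof is essentially routine; the only non-obvious step is the truncation trick, which circumvents the fact that we cannot a priori identify \(X_\tau\indi{\tau<\infty}\) with \(X_\sigma\) for a single stopping time \(\sigma\) in the class (D) family.
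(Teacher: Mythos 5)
Your proof is correct and takes essentially the same route as the paper's: the backward direction is dismissed identically, and the key step of the forward direction — passing to the bounded stopping time \(\tau\wedge n\) so as to land in the class-(D) family, letting \(n\to\infty\), and handling the \(\{\tau=\infty\}\) contribution separately via integrability of \(X_\infty\) — is the same. The only difference is bookkeeping: you verify the quantitative criterion \(\sup_\tau E\bigl[\abs{X_\tau}\indi{\abs{X_\tau}>K}\bigr]\to 0\) directly with monotone convergence, whereas the paper argues through closure properties of uniformly integrable families (domination, limits in probability, and an \(\varepsilon/2\)--\(\delta\) splitting).
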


\begin{proof}
We only need to show necessity: Let \(X\) be of class {\rm (D)} and fix arbitrary \(X_\infty\in L^1(P)\) and let \(\T\) denote the set of all stopping times. Then, for any \(\tau\in\T\) and \(n\in\N\), \(\abs{X_{\tau\wedge n}}\indi{\tau<\infty}\leq\abs{X_{\tau\wedge n}}\). Hence the set \(\{\abs{X_{\tau\wedge n}}\indi{\tau<\infty} : \tau\in\T, n\in\N\}\cup\{\abs{X_\tau} : \tau<\infty\in\T\}\) is uniformly integrable as well. As we may also include limits in probability of its elements, and \(\abs{X_\tau}\indi{\tau<\infty}=\lim_{n\to\infty}\abs{X_{\tau\wedge n}}\indi{\tau<\infty}\) a.s.\ for any \(\tau\in\T\), we observe that \(\{\abs{X_\tau}\indi{\tau<\infty} : \tau\in\T\}\) is uniformly integrable.

With \(X_\infty\in L^1(P)\), also \(\{\abs{X_\infty}\indi{\tau=\infty} : \tau\in\T\}\) is uniformly integrable. Now let \(\frac{\varepsilon}{2}>0\). By uniform integrability there exists a \(\delta>0\) such that \(\max\bigl\{E[\abs{X_\tau}\indi{\tau<\infty}\indi{A}],E[\abs{X_\infty}\indi{\tau=\infty}\indi{A}]\bigr\}\leq\frac{\varepsilon}{2}\) for any measurable \(A\) with \(P(A)<\delta\) and any \(\tau\in\T\). Hence we have \(E[(\abs{X_\tau}\indi{\tau<\infty}+\abs{X_\infty}\indi{\tau=\infty})\indi{A}]\leq\varepsilon\), which shows that \(\{\abs{X_\tau}\indi{\tau<\infty}+\abs{X_\infty}\indi{\tau=\infty} : \tau\in\T\}\) is uniformly integrable as claimed.
\end{proof}

\begin{lemma}\label{lem:LdG}
If \(L\) is a (measurable) process of class {\rm (D)} then there exists a constant \(K\in\R_+\) such that for any process \(G\) that is a.s.\ right-continuous, non-decreasing, non-negative and bounded by some \(G_\infty\in L^\infty(P)\) and all random variables \(0\leq a\leq b\leq\infty\) a.s.\ we have
\begin{enumerate}
\item\label{ELdGbd}
\begin{equation*}
E\biggl[\int_{[a,b)}\abs{L_t}\,dG_t\biggr]\leq K\norm{G_\infty}_\infty<\infty
\end{equation*}
\end{enumerate}
and
\begin{enumerate}[resume]
\item\label{chgvar}
\begin{equation*}
\int_{[a,b)}\abs{L_t}\,dG_t=\int_0^\infty\bigl\lvert L_{\tau^G(x)}\bigr\rvert\indi{\tau^G(x)\in[a,b)}\,dx<\infty\quad\text{a.s.},
\end{equation*}
\end{enumerate}
where \(\tau^G(x):=\inf\{t\geq 0\mid G_t\geq x\}\), \(x\in\R_+\), and \(\Delta G_0\equiv G_0\); equivalently, ``\,\(G_t>x\)'' in \(\tau^G(x)\).

If \(\{\abs{L_\tau}\indi{\tau<\infty}:\tau\in\T\}\) is bounded in \(L^\infty(P)\) by \(K\in\R_+\) and \(G\) bounded by some \(G_\infty\in L^1(P)\), \ref{ELdGbd} holds with \(KE[G_\infty]\) instead and \ref{chgvar} as stated.
\end{lemma}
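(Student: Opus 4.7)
My plan is to establish the pathwise change-of-variable identity (ii) first, since the $L^1$-bound (i) then follows by Fubini-Tonelli combined with the class (D) property, and the alternative $L^\infty$-bounded version is essentially the same argument.

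For (ii), I would interpret $G$ pathwise (with the convention $G_{0-} := 0$, so that a possible jump $\Delta G_0 = G_0$ is picked up at the origin) as the distribution function of a finite Borel measure $\mu_G$ on $[0,\infty)$ with total mass $G_\infty$, and recognise $\tau^G(x) = \inf\{t\geq 0\mid G_t\geq x\}$ as the corresponding quantile function. Right-continuity of $G$ yields the key equivalence $\{x\geq 0\mid\tau^G(x)\leq t\} = \{x\geq 0\mid G_t\geq x\} = [0,G_t]$ for every $t$, so that $\mu_G$ is the pushforward of Lebesgue measure on $[0,G_\infty)$ under $\tau^G$. The standard pushforward change-of-variable formula, applied to $f = \abs{L}\cdot\indi{[a,b)}$, then gives
\[
\int_{[a,b)}\abs{L_t}\,dG_t \;=\; \int_0^\infty \abs{L_{\tau^G(x)}}\,\indi{\tau^G(x)\in[a,b)}\,dx
\]
pathwise; the variant with ``$G_t>x$'' in the definition of $\tau^G$ differs only on the at most countable set of $x$-values for which $G$ has a flat piece ending at $x$, hence on a Lebesgue-null set.

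For (i), I would take expectations and invoke Fubini-Tonelli in (ii):
\[
E\Bigl[\int_{[a,b)}\abs{L_t}\,dG_t\Bigr] \;=\; \int_0^\infty E\bigl[\abs{L_{\tau^G(x)}}\,\indi{\tau^G(x)\in[a,b)}\bigr]\,dx.
\]
Since $G$ is right-continuous and adapted (as is the case for every admissible mixed strategy), each $\tau^G(x)$ is a stopping time. Put $K := \sup\{E[\abs{L_\tau}]\mid\tau\in\T,\ \tau<\infty\text{ a.s.}\}$, which is finite by class (D); the bound $E[\abs{L_{\tau^G(x)}}\,\indi{\tau^G(x)<\infty}]\leq K$ extends from finite stopping times to general ones by Fatou applied to $\tau^G(x)\wedge n$. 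Noting that $\tau^G(x)<\infty$ forces $x\leq G_\infty$, and hence $\tau^G(x)=\infty$ a.s.\ as soon as $x>\norm{G_\infty}_\infty$, the outer integral reduces to one over $[0,\norm{G_\infty}_\infty]$ and delivers the bound $K\norm{G_\infty}_\infty$.

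For the alternative statement, the uniform $L^\infty$-bound $\abs{L_{\tau^G(x)}}\indi{\tau^G(x)<\infty}\leq K$ lets me control the integrand by $K\,P[\tau^G(x)<\infty]$ directly; a further application of Fubini together with $\int_0^\infty\indi{x\leq G_\infty}\,dx = G_\infty$ yields the bound $K\,E[G_\infty]$. The main obstacle is really the careful bookkeeping in (ii) — in particular handling the half-open interval $[a,b)$, the boundary convention $\Delta G_0 = G_0$, and reconciling the ``$\geq x$'' and ``$>x$'' versions of $\tau^G$. Once the pushforward identity is in place, both integrability bounds drop out as essentially immediate consequences of class (D), respectively of the $L^\infty$-hypothesis.
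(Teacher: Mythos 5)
Your proposal is correct and follows essentially the same route as the paper: the pathwise change of variable via the generalized inverse \(\tau^G\) (which the paper establishes by a monotone class argument over intervals, equivalent to your pushforward formulation), followed by Fubini and the class (D), respectively \(L^\infty\), bound on \(\{\abs{L_\tau}\indi{\tau<\infty}:\tau\in\T\}\). Your Fatou/truncation step extending the \(L^1\)-bound to possibly infinite stopping times is exactly what the paper delegates to its Lemma \ref{lem:classD}, so there is no substantive difference.
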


\begin{proof}
The (a.s.) non-decreasing family of stopping times \(\bigl(\tau^G(x)\bigr)_{x\in\R_+}\) is the left-continuous inverse of \(G\), which satisfies
\begin{equation*}
\tau^G(x)\leq t\ \Leftrightarrow\ G_t\geq x.
\end{equation*}
Thus, with the convention \(\int_{[0,c]}\,dG=G_c\), \(\int_{[0,\infty)}\indi{A}\,dG=\int_0^\infty\indi{\tau^G(x)\in A}\,dx\) for all \(A\in\{[0,c]:c\in\R_+\}\) and hence for \(A=\R_+\) by monotone convergence, a.s. By a monotone class argument\footnote{
See, e.g., \cite{Kallenberg02}, Theorem 1.1. 
}
the relation holds on all of \(\B(\R_+)\) a.s.

Since \(L_\cdot(\omega):\R_+\to\R\), \(t\mapsto L_t(\omega)\), is Borel measurable\footnote{
See, e.g., \cite{Kallenberg02}, Lemma 1.26 (i). 
} 
like the function \(\indi{t\in[a(\omega),b(\omega))}\), we now obtain the following change-of-variable formula\footnote{
See, e.g., \cite{Kallenberg02}, Lemma 1.22. One needs to restrict \(dx\) to \(\{\tau^G(x)<\infty\}\), which is redundant whenever we have \([a,b)\).
}:
\begin{equation*}
\int_{[a,b)}\abs{L_t}\,dG_t=\int_{\{\tau^G(x)<\infty\}}\bigl\lvert L_{\tau^G(x)}\bigr\rvert\indi{\tau^G(x)\in[a,b)}\,dx \qquad\text{a.s.} 
\end{equation*}
As \(\inf\{t\geq 0\mid G_t>x\}=\tau^G(x+)\), which differs from \(\tau^G(x)\) only on a set of Lebesgue measure (\(dx\)) 0, we can equivalently use the former. By Fubini's Theorem
\begin{equation}\label{ELdG}
E\biggl[\int_0^\infty\bigl\lvert L_{\tau^G(x)}\bigr\rvert\indi{\tau^G(x)\in[a,b)}\,dx\biggr]\leq\int_0^{\norm{G_\infty}_\infty}E\Bigl[\bigl\lvert L_{\tau^G(x)}\bigr\rvert\indi{\tau^G(x)<\infty}\Bigr]\,dx.
\end{equation}
As \(L\) is of class {\rm (D)}, \(\{\abs{L_\tau}\indi{\tau<\infty}:\tau\in\T\}\) is bounded in \(L^1(P)\) by some \(K<\infty\), whence the RHS of \eqref{ELdG} is bounded by \(K\norm{G_\infty}_\infty\) if the latter is finite. If \(\sup_{\tau\in\T}\norm{L_\tau\indi{\tau<\infty}}_\infty\leq K\) and \(G\) bounded by \(G_\infty\in L^1(P)\), then the RHS of \eqref{ELdG} is bounded by
\begin{equation*}
\int_0^\infty E\Bigl[K\indi{\tau^G(x)<\infty}\Bigr]\,dx=E\biggl[\int_0^\infty K\indi{\tau^G(x)<\infty}\,dx\biggr]\leq KE[G_\infty]<\infty.
\end{equation*}
In either case it follows that \(\int_{[a,b)}\abs{L_t}\,dG_t<\infty\) a.s.
\end{proof}

\begin{lemma}\label{lem:oalpharc}
Suppose \(\alpha\) is a progressively measurable process and \({}^o\alpha\) its optional projection. Let \(\tau\in\T\) be given. Then \({}^o\alpha\) is a.s.\ right-continuous at \(\tau<\infty\) where \(\alpha\) is so.
\end{lemma}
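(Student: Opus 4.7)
The plan is to combine the defining property of the optional projection (which, given that $\alpha$ is progressively measurable, forces ${}^o\alpha=\alpha$ at every stopping time) with a section-theoretic argument that promotes convergence along stopping-time sequences to pathwise right-continuity.

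First, I would record the key reduction: for any stopping time $\sigma$, progressive measurability of $\alpha$ makes $\alpha_\sigma$ be $\F_\sigma$-measurable, so the characterizing property of the optional projection, ${}^o\alpha_\sigma=E[\alpha_\sigma\mid\F_\sigma]$ a.s.\ on $\{\sigma<\infty\}$, collapses to ${}^o\alpha_\sigma=\alpha_\sigma$ a.s.\ on $\{\sigma<\infty\}$. Applied to $\sigma=\tau$ and to $\sigma=\tau+s$ for each $s>0$, and with $B:=\{\tau<\infty,\,\alpha(\cdot)\text{ right-continuous at }\tau\}$, this yields the intermediate conclusion that for \emph{any} sequence of stopping times $\sigma_n\searrow\tau$ one has ${}^o\alpha(\sigma_n)=\alpha(\sigma_n)\to\alpha(\tau)={}^o\alpha(\tau)$ a.s.\ on $B$.

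Second, I would upgrade convergence along arbitrary stopping-time sequences to pathwise right-continuity of $t\mapsto{}^o\alpha(t,\omega)$ by a contradiction argument built on the optional section theorem. For $\varepsilon>0$ and $n\in\N$ consider the set
\begin{equation*}
D_{\varepsilon,n}:=\bigl\{(\omega,t):\omega\in B,\ \tau(\omega)<t\leq\tau(\omega)+1/n,\ \bigl\lvert{}^o\alpha(t,\omega)-{}^o\alpha(\tau(\omega),\omega)\bigr\rvert>\varepsilon\bigr\}.
\end{equation*}
$D_{\varepsilon,n}$ is optional, as the strip $\{\tau<t\leq\tau+1/n\}$ is optional ($\tau$ being a stopping time) and ${}^o\alpha$ is optional. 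If its projection on $\Omega$ had positive probability for every $n$, the section theorem would furnish stopping times $\rho_n$ whose graphs lie in $D_{\varepsilon,n}$ on sets of probability bounded away from zero; on those sets $\tau<\rho_n\leq\tau+1/n$ and $\lvert{}^o\alpha(\rho_n)-{}^o\alpha(\tau)\rvert>\varepsilon$ both hold. Thus $\rho_n\searrow\tau$ on a set of positive probability along which ${}^o\alpha(\rho_n)\not\to{}^o\alpha(\tau)$, contradicting the first step. Consequently, for each $\varepsilon>0$ the projection of $D_{\varepsilon,n}$ becomes null for all sufficiently large $n$, and intersecting over $\varepsilon$ in a countable sequence decreasing to $0$ gives right-continuity of ${}^o\alpha$ at $\tau$, a.s.\ on $B$.

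The main obstacle is the measurability and section-theoretic bookkeeping. One needs to verify that the event $B$ is suitably measurable (which one obtains via progressive measurability of $\alpha$ together with a countable characterization of right-continuity along decreasing stopping times) and that $D_{\varepsilon,n}$ is optional. It is precisely the strict inequality $\rho_n>\tau$ afforded by the strip $(\tau,\tau+1/n]$ that is essential for the contradiction, so extracting $\rho_n$ from the section theorem in such a way that the graph lies strictly to the right of $\tau$ is the point to be handled with care.
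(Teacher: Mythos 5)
Your two ingredients are exactly the paper's: progressive measurability makes \(\alpha_\sigma\) \(\F_\sigma\)-measurable, so the projection identity collapses to \({}^o\alpha_\sigma=\alpha_\sigma\) a.s.\ on \(\{\sigma<\infty\}\) for every \(\sigma\in\T\), and the optional section theorem converts ``no stopping time can sit in the bad set'' into ``the bad set has null projection''. The difference lies in how right-continuity of \(\alpha\) enters. The paper intersects the bad set \(\{{}^o\alpha_t-{}^o\alpha_\tau>\varepsilon\}\) with the stochastic interval \([\tau,\tau_\varepsilon)\), where \(\tau_\varepsilon:=\inf\{t\geq\tau:\lvert\alpha_t-\alpha_\tau\rvert>\varepsilon\}\) is a stopping time by progressivity; any section \(\sigma\) of that set would satisfy \(\lvert\alpha_\sigma-\alpha_\tau\rvert\leq\varepsilon\) and \({}^o\alpha_\sigma-{}^o\alpha_\tau>\varepsilon\) simultaneously, which the identity at stopping times forbids, so each single section has probability zero with no limiting argument at all; right-continuity of \(\alpha\) is used only pointwise at the end, to guarantee \(\tau_{1/k}>\tau\) so the interval is nonempty. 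This device also removes your need to put the event \(B\) inside the optional set; the \(\F_\tau\)-measurability of \(B\), which you only flag, is most easily obtained from those very debuts \(\tau_{1/k}\).

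Your strip-plus-sequence variant does work, but the step ``thus \(\rho_n\searrow\tau\) on a set of positive probability along which \({}^o\alpha(\rho_n)\not\to{}^o\alpha(\tau)\)'' is stated too loosely: the section theorem gives, for each \(n\) separately, a stopping time \(\rho_n\) with \(P[\rho_n<\infty]\geq c>0\), but these events vary with \(n\) and need not share a common positive-probability core, and \(\rho_n=\infty\) off them, so \((\rho_n)\) is not yet a sequence to which your first step applies. The repair is standard: set \(\sigma_n:=\rho_n\wedge(\tau+1/n)\), a genuine sequence of stopping times with \(\tau\leq\sigma_n\to\tau\), note that \(\{\rho_n<\infty\}\subseteq B\cap\{\lvert{}^o\alpha(\sigma_n)-{}^o\alpha(\tau)\rvert>\varepsilon\}\), and then either apply reverse Fatou to \(\limsup_n\{\rho_n<\infty\}\) (which has probability at least \(c\)) or simply observe that a.s.\ convergence along the fixed sequence \((\sigma_n)\) implies \(P\bigl[\lvert{}^o\alpha(\sigma_n)-{}^o\alpha(\tau)\rvert>\varepsilon,\,B\bigr]\to0\), contradicting \(P[\rho_n<\infty]\geq c\). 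With that patch, and with the measurability of \(B\) supplied via the debuts, your proof is correct; it is essentially the paper's argument, with the limit pushed into a sequential step that the paper's choice of \([\tau,\tau_\varepsilon)\) avoids entirely.
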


\begin{proof}
For any \(\varepsilon>0\) define \(\tau_\varepsilon:=\inf\{t\geq\tau:\abs{\alpha_t-\alpha_\tau}>\varepsilon\}\in\T\) as \(\alpha\) is progressive. Then the set \(B_\varepsilon:=\{(\omega,t)\mid\tau\leq t<\infty,{}^o\alpha_t-{}^o\alpha_\tau>\varepsilon\}\cap[\tau,\tau_\varepsilon)\) is optional and \(P[\sigma\in B_\varepsilon]=0\) for any \(\sigma\in\T\) because \(\alpha_\sigma={}^o\alpha_\sigma\) on \(\{\sigma<\infty\}\) a.s. Hence, if we denote by \(A_\varepsilon\) the canonical projection of \(B_\varepsilon\) onto \(\Omega\), \(P[A_\varepsilon]=0\) by the optional section theorem (\cite{DellacherieMeyer78}, Theorem IV.84), and therefore \(A^c:=\bigcap_{n\in\N}A^c_{1/n}\) is an a.s.\ event with \((A^c\times\R_+)\cap B_{1/n}=\emptyset\), \(n\in\N\). By switching signs we obtain the same result for \(\abs{{}^o\alpha_t-{}^o\alpha_\tau}>1/n\) in \(B_{1/n}\) (we do not rename any sets). 

Now, given any \(\omega\in A^c\) for which \(\alpha\) is right-continuous at \(\tau\), we must have \(\tau_{1/\ceil{\varepsilon^{-1}}}(\omega)>\tau\) for any \(\varepsilon>0\), such that \(\abs{{}^o\alpha_t-{}^o\alpha_\tau}\leq1/\ceil{\varepsilon^{-1}}\leq\varepsilon\) on \([\tau(\omega),\tau_{1/\ceil{\varepsilon^{-1}}}(\omega))\not=\emptyset\).
\end{proof}

\subsection{Supplementary results}

\begin{lemma}\label{lem:alphapayoff}
Suppose \(\vartheta=\hat\tau^\vartheta_1=\hat\tau^\vartheta_2\) and \(0=\alpha^\vartheta_i(\vartheta)<\alpha^\vartheta_j(\vartheta)<1\). Then \(\frac{\lambda^\vartheta_M}{\lambda^\vartheta_{L,i}}=\frac{\alpha^\vartheta_j(\vartheta)}{1-\alpha^\vartheta_j(\vartheta)}\).
\end{lemma}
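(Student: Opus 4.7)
The plan is first to locate the applicable branch of Definition \ref{def:outcome}. Since $\vartheta=\hat\tau^\vartheta_1=\hat\tau^\vartheta_2$, $\alpha_i^\vartheta(\vartheta)\wedge\alpha_j^\vartheta(\vartheta)=0$ and $\alpha_i^\vartheta(\vartheta)\vee\alpha_j^\vartheta(\vartheta)=\alpha_j^\vartheta(\vartheta)<1$, the last case of the definition applies. Writing $D:=\bigl(1-G_i^\vartheta(\vartheta-)\bigr)\bigl(1-G_j^\vartheta(\vartheta-)\bigr)$ and setting $\alpha_i^\vartheta(\vartheta)=0$ in the formulas yields
\begin{align*}
\lambda^\vartheta_{L,i}/D &= (1-\alpha_j^\vartheta(\vartheta))\cdot\tfrac12\bigl(\liminf+\limsup\bigr)\mu_L(\alpha_i^\vartheta(t),\alpha_j^\vartheta(t)),\\
\lambda^\vartheta_{L,j}/D &= \alpha_j^\vartheta(\vartheta)+(1-\alpha_j^\vartheta(\vartheta))\cdot\tfrac12\bigl(\liminf+\limsup\bigr)\mu_L(\alpha_j^\vartheta(t),\alpha_i^\vartheta(t)),
\end{align*}
with $\liminf,\limsup$ taken as $t\searrow\vartheta$ along $\{\alpha_i^\vartheta+\alpha_j^\vartheta>0\}$. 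Because $\alpha_j^\vartheta(\vartheta)\in(0,1)$, Definition \ref{def:alpha}(ii) gives right-continuity of $\alpha_j^\vartheta$ at $\vartheta$, so $y_t:=\alpha_j^\vartheta(t)\to y^*:=\alpha_j^\vartheta(\vartheta)>0$; in particular $\alpha_j^\vartheta(t)>0$ eventually, so the restriction $\alpha_i^\vartheta+\alpha_j^\vartheta>0$ becomes vacuous.

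The key algebraic observation is that whenever $\mu_L$ and $\mu_M$ are defined,
\[
\mu_M(x,y)=\frac{y}{1-y}\,\mu_L(x,y),\qquad \mu_L(y,x)=1-\mu_L(x,y)-\mu_M(x,y)=1-\frac{\mu_L(x,y)}{1-y}.
\]
The ratio $y/(1-y)$ is independent of $x$, which is what makes the argument go through even though $\alpha_i^\vartheta(t)$ need not converge. Applied along $t\searrow\vartheta$, the continuous factor $1/(1-y_t)\to 1/(1-y^*)$ may be pulled out of $\liminf$ and $\limsup$; denoting $A:=\tfrac12\bigl(\liminf+\limsup\bigr)\mu_L(\alpha_i^\vartheta(t),y_t)$ one obtains
\[
\tfrac12\bigl(\liminf+\limsup\bigr)\mu_L(y_t,\alpha_i^\vartheta(t))=1-\frac{A}{1-y^*}.
\]

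Plugging this into the expressions above gives $\lambda^\vartheta_{L,i}/D=(1-y^*)A$ and $\lambda^\vartheta_{L,j}/D=y^*+(1-y^*)-A=1-A$, so that
\[
\lambda^\vartheta_M/D = 1-\lambda^\vartheta_{L,i}/D-\lambda^\vartheta_{L,j}/D = 1-(1-y^*)A-(1-A)=y^*A,
\]
whence $\lambda^\vartheta_M/\lambda^\vartheta_{L,i}=y^*/(1-y^*)=\alpha_j^\vartheta(\vartheta)/(1-\alpha_j^\vartheta(\vartheta))$, as claimed (the identity $(1-\alpha_j^\vartheta(\vartheta))\lambda^\vartheta_M=\alpha_j^\vartheta(\vartheta)\lambda^\vartheta_{L,i}$ also covers the degenerate case $A=0$).

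The main conceptual obstacle is handling the possibly ill-behaved $\alpha_i^\vartheta(t)$: a priori the $\liminf$ and $\limsup$ of $\mu_L(\alpha_i^\vartheta(t),\alpha_j^\vartheta(t))$ and of $\mu_L(\alpha_j^\vartheta(t),\alpha_i^\vartheta(t))$ are unrelated quantities. The trick is to avoid computing either by exploiting the proportionality $\mu_M=\tfrac{y}{1-y}\mu_L$, which together with $\mu_L(x,y)+\mu_L(y,x)+\mu_M(x,y)=1$ lets one express everything in terms of the single quantity $A$ after using only the convergence $y_t\to y^*$.
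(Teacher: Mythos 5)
Your proof is correct and follows essentially the same route as the paper's: both isolate the last case of Definition \ref{def:outcome}, normalize by \((1-G_i^\vartheta(\vartheta-))(1-G_j^\vartheta(\vartheta-))\), use right-continuity of \(\alpha^\vartheta_j\) at \(\vartheta\) (since \(\alpha^\vartheta_j(\vartheta)\in(0,1)\)), and reduce everything to the proportionality \(\mu_M(x,y)=\tfrac{y}{1-y}\,\mu_L(x,y)\) combined with \(\mu_L(x,y)+\mu_L(y,x)+\mu_M(x,y)=1\). The only minor difference is that the paper evaluates the \(\liminf\)/\(\limsup\) explicitly as \(\mu_L(\underline{\alpha_i},\alpha_j)\), \(\mu_L(\overline{\alpha_i},\alpha_j)\), etc., using monotonicity and continuity of \(\mu_L\) and \(\mu_F\) in the first argument, whereas you keep them abstract and pull out the convergent factor \(1/(1-\alpha^\vartheta_j(t))\), which works equally well.
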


\begin{proof}
We introduce the function \(\mu_F(x,y):=\mu_L(y,x)=1-\mu_L(x,y)-\mu_M(x,y)\) and use the short-hand notation 
\begin{flalign*}
&& \underline{\mu_\cdot}&=\liminf_{\underset{\alpha^\vartheta_i(t)+\alpha^\vartheta_j(t)>0}{t\searrow\vartheta}}\mu_\cdot(\alpha^\vartheta_i(t),\alpha^\vartheta_j(t)),\quad\overline{\mu_\cdot}=\limsup_{\underset{\alpha^\vartheta_i(t)+\alpha^\vartheta_j(t)>0}{t\searrow\vartheta}}\mu_\cdot(\alpha^\vartheta_i(t),\alpha^\vartheta_j(t)), &&\\
&& \underline{\alpha_i}&=\liminf_{t\searrow\vartheta}\alpha^\vartheta_i(t),\quad\overline{\alpha_i}=\limsup_{t\searrow\vartheta}\alpha^\vartheta_i(t)\quad\text{and}\quad\alpha_j=\alpha^\vartheta_j(\vartheta).&&
\end{flalign*}
In our current case \(\vartheta=\hat\tau^\vartheta_1=\hat\tau^\vartheta_2\) and \(0=\alpha^\vartheta_i(\vartheta)<\alpha^\vartheta_j(\vartheta)<1\), we now have
\begin{flalign*}
&& \lambda^\vartheta_{L,i}&=(1-\alpha_j)\frac12(\underline{\mu_L}+\overline{\mu_L}), &&\\
&& \lambda^\vartheta_{L,j}&=\alpha_j+(1-\alpha_j)\frac12(\underline{\mu_F}+\overline{\mu_F}) &&\\
\text{and thus}\\
&& \lambda^\vartheta_M&=1-\lambda^\vartheta_{L,i}-\lambda^\vartheta_{L,j} &&\\
&& &=(1-\alpha_j)\frac12(2-\underline{\mu_L}-\overline{\mu_L}-\underline{\mu_F}-\overline{\mu_F}). &&
\end{flalign*}
Using \(\alpha_j=\lim_{t\searrow\vartheta}\alpha^\vartheta_j(t)\) and the continuity and monotonicity of \(\mu_L\) and \(\mu_F\) we obtain
\begin{align*}
\frac{\lambda^\vartheta_M}{\lambda^\vartheta_{L,i}}&=\frac{2-\underline{\mu_L}-\overline{\mu_L}-\underline{\mu_F}-\overline{\mu_F}}{\underline{\mu_L}+\overline{\mu_L}}\\
&=\frac{2-\mu_L(\underline{\alpha_i},\alpha_j)-\mu_L(\overline{\alpha_i},\alpha_j)-\mu_F(\overline{\alpha_i},\alpha_j)-\mu_F(\underline{\alpha_i},\alpha_j)}{\mu_L(\underline{\alpha_i},\alpha_j)+\mu_L(\overline{\alpha_i},\alpha_j)}\\
&=\frac{\mu_M(\underline{\alpha_i},\alpha_j)+\mu_M(\overline{\alpha_i},\alpha_j)}{\mu_L(\underline{\alpha_i},\alpha_j)+\mu_L(\overline{\alpha_i},\alpha_j)}=\frac{\alpha_j}{1-\alpha_j}. \qedhere
\end{align*}
\end{proof}

\begin{lemma}\label{lem:limit}
Fix \(\sigma\in\T\) and suppose \(\bigl(G_1,\alpha_1\bigr)\) and \(\bigl(G_2,\alpha_2\bigr)\) are time-consistent extended mixed strategies which induce an equilibrium in all subgames \(\Gamma^\vartheta\) beginning at some \(\vartheta\in\T\) taking values in \((\sigma,\infty]\) a.s. 

If \(\sigma=\hat\tau^\sigma_1=\hat\tau^\sigma_2\) and \(\lim_{t\searrow\sigma}\alpha^\sigma_1(t)=\lim_{t\searrow\sigma}\alpha^\sigma_2(t)=0\), then 
\begin{equation*}
L^1_\sigma-F^1_\sigma=L^2_\sigma-F^2_\sigma=0\qquad\text{a.s.}
\end{equation*}
\end{lemma}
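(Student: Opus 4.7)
The plan is to derive the equality $L^j_\sigma = F^j_\sigma$ (for each fixed $j \in \{1,2\}$, with $i$ denoting the other index) as the limit of an indifference relation obtained from the assumed equilibria at subgames $\Gamma^{\tau_n}$ for a sequence of stopping times $\tau_n \searrow \sigma$ built from the hypotheses on the $\alpha^\sigma_\cdot$'s. The target is to extract \eqref{alphaindiff} at $\tau_n$ with $\alpha^{\tau_n}_i(\tau_n) = \alpha^\sigma_i(\tau_n) \to 0$ and read off $L^j_\sigma - F^j_\sigma = 0$ in the limit by right-continuity.

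First I would construct the approximating stopping times. The hypothesis $\hat\tau^\sigma_j = \sigma$ means $\alpha^\sigma_j > 0$ in every right-neighborhood of $\sigma$ a.s., and $\lim_{t \searrow \sigma} \alpha^\sigma_j(t) = 0$ forces $\alpha^\sigma_j < 1$ in some such neighborhood. Hence the progressively measurable set
\begin{equation*}
B_n := \bigl\{(\omega, t) : \sigma(\omega) < t < \sigma(\omega) + 1/n,\ 0 < \alpha^\sigma_j(\omega, t) < 1\bigr\}
\end{equation*}
has projection on $\Omega$ increasing to full measure. Invoking the section theorem for progressive sets (valid under the usual conditions on $\filt{F}$) yields stopping times $\tau_n$ with $(\omega, \tau_n(\omega)) \in B_n$ on an event of probability approaching one; passing to a subsequence by Borel--Cantelli I can assume $\alpha^\sigma_j(\tau_n) \in (0,1)$ for all $n$ a.s., so that $\tau_n \searrow \sigma$ and $\alpha^\sigma_j(\tau_n) \to 0$.

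Next I would use the equilibrium at $\Gamma^{\tau_n}$. Time consistency gives $\alpha^{\tau_n}_j(\tau_n) = \alpha^\sigma_j(\tau_n) \in (0,1)$ a.s. The best-reply discussion preceding Proposition \ref{prop:eqlL>F} guarantees that a pure best reply always exists, so for an interior $\alpha^{\tau_n}_j(\tau_n)$ to also be a best reply, player $j$ must be indifferent between stopping and waiting at $\tau_n$. Equation \eqref{alphaindiff} (with the roles of $i$ and $j$ interchanged) then yields
\begin{equation*}
L^j_{\tau_n} - F^j_{\tau_n} = \alpha^{\tau_n}_i(\tau_n)\bigl(L^j_{\tau_n} - M^j_{\tau_n}\bigr)
\end{equation*}
in the main range $L^j_{\tau_n} > F^j_{\tau_n}$, with the degenerate variant $\alpha^{\tau_n}_i(\tau_n)(F^j_{\tau_n} - M^j_{\tau_n}) = 0$ when $L^j_{\tau_n} = F^j_{\tau_n}$, while the range $L^j < F^j$ with $L^j \geq M^j$ is ruled out as strict waiting-preference in \eqref{indiff}. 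By time consistency again, $\alpha^{\tau_n}_i(\tau_n) = \alpha^\sigma_i(\tau_n) \to 0$ by the right-limit hypothesis. Passing to $n \to \infty$ in the displayed equation, using right-continuity of $L^j, F^j, M^j$ and the class-(D) boundedness from Assumption \ref{asm:payoffs}, delivers $L^j_\sigma - F^j_\sigma = 0$ almost surely.

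The main obstacle I expect is in the stopping-time construction: because $\alpha^\sigma_j$ is only progressively measurable (not optional), I cannot simply use a first-hitting time to pick $\tau_n$ with $\alpha^\sigma_j(\tau_n) > 0$ and must instead invoke the progressive section theorem, which produces the selection only on an almost sure event and requires a Borel--Cantelli cleanup to arrange convergence. Secondary cases to address are the degenerate regimes where the indifference equation loses its punch (notably the edge case $L^j < M^j$, not formally excluded by Assumption \ref{asm:payoffs}); these I would handle either by the standard convention $L^j \geq M^j$ or by running the whole argument in parallel with $i$ and $j$ swapped, exploiting the symmetric hypothesis $\hat\tau^\sigma_i = \sigma$ and $\lim_{t\searrow\sigma}\alpha^\sigma_j(t)=0$.
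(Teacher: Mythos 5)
There is a genuine gap, and it sits exactly at the step where you invoke indifference. The best-reply discussion preceding Proposition \ref{prop:eqlL>F} (and hence \eqref{indiff}, \eqref{alphaindiff}) describes player \(j\)'s options only when the \emph{opponent's} strategy terminates the game at the subgame's start, i.e.\ when \(\tau_n=\hat\tau^{\tau_n}_i\) (or at least \(\Delta G^{\tau_n}_i(\tau_n)>0\)). Your \(\tau_n\) are selected from the set where \(\alpha^\sigma_j\in(0,1)\), which says nothing about player \(i\)'s behaviour at or immediately after \(\tau_n\); the hypothesis \(\hat\tau^\sigma_i=\sigma\) only makes \(\alpha^\sigma_i\) positive at times accumulating at \(\sigma\), possibly on a set disjoint from the support of \(\alpha^\sigma_j\). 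If player \(i\) has neither positive \(\alpha\) nor an atom of \(G^{\tau_n}_i\) at \(\tau_n\), then by Definition \ref{def:outcome} any choice \(\alpha^{\tau_n}_j(\tau_n)\in(0,1]\) makes \(j\) the leader with probability one and yields the payoff \(L^j_{\tau_n}\) regardless of the interior value, so an interior \(\alpha_j\) being a best reply implies no indifference between \(L^j\) and \(F^j\) whatsoever, and your displayed equation \(L^j_{\tau_n}-F^j_{\tau_n}=\alpha^{\tau_n}_i(\tau_n)(L^j_{\tau_n}-M^j_{\tau_n})\) does not follow. For the same reason the sentence ``the range \(L^j<F^j\) is ruled out as strict waiting-preference in \eqref{indiff}'' is an inapplicable citation: when \(i\) is inactive at \(\tau_n\), stopping there with \(L^j_{\tau_n}<F^j_{\tau_n}\) is not immediately suboptimal; it must be compared with waiting until the opponent next becomes active. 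This disjoint-support scenario is precisely the hard half of the lemma, and the paper handles it by a separate argument: where \(\alpha^\sigma_j\in(0,1)\) and \(L^i<F^i\), equilibrium forces the supports of \(\alpha^\sigma_i\) and \(\alpha^\sigma_j\) to be disjoint, and then on the stopping-time window \([\sigma,\sigma']\) (where \(L^i\) cannot rise and \(F^i\) cannot fall by more than a third of the gap, by right-continuity) becoming leader is strictly dominated by waiting for the next time \(\alpha^\sigma_j>0\), contradicting \(\hat\tau^\sigma_j=\sigma\). Nothing in your proposal plays this role, so the inequality \(L^j_\sigma\geq F^j_\sigma\) is unproved.

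What your construction does support is the other half. Since \(\alpha^{\tau_n}_j(\tau_n)=\alpha^\sigma_j(\tau_n)>0\), the game in \(\Gamma^{\tau_n}\) ends at \(\tau_n\) from player \(i\)'s point of view, so the pre-Proposition \ref{prop:eqlL>F} analysis applies to \(i\): if \(L^i_{\tau_n}-F^i_{\tau_n}>\alpha^\sigma_j(\tau_n)(L^i_{\tau_n}-M^i_{\tau_n})\), player \(i\)'s unique best reply is \(\alpha_i(\tau_n)=1\), contradicting \(\alpha^\sigma_i(\tau_n)\to 0\); hence \(L^i_{\tau_n}-F^i_{\tau_n}\leq\alpha^\sigma_j(\tau_n)(L^i_{\tau_n}-M^i_{\tau_n})\) for large \(n\), and right-continuity gives \(L^i_\sigma\leq F^i_\sigma\) (this is the paper's first case, and your careful measurable-selection construction of the \(\tau_n\) is a reasonable way to make it rigorous, arguably more explicit than the paper's ``in any such neighbourhood'' phrasing). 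So the proposal, as it stands, proves at best one inequality; the equality claimed in the lemma requires the missing dominance argument for the case \(L^i_\sigma<F^i_\sigma\).
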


\noindent
Note that we do not impose right-continuity of any \(\alpha^\sigma_i\) at \(\sigma\) in the lemma.

\begin{proof}
Let \(i,j\in\{1,2\}\), \(i\not=j\). Suppose first \(L^i_\sigma>F^i_\sigma\). By hypothesis there exist arbitrarily small right (random) neighbourhoods of \(\sigma\) in which \(\alpha^\sigma_i\) is bounded away from 1, in which \(\alpha^\sigma_j\) takes some strictly positive values, and in which \(L^i>F^i\)\,(\(\geq M^i\)). In any such neighbourhood we must have by condition \eqref{indiff} that
\begin{equation*}
\alpha^\sigma_j>0\Rightarrow\alpha^\sigma_j\geq\frac{L^i-F^i}{L^i-M^i},
\end{equation*}
implying \(\limsup_{t\searrow\sigma}\frac{L^i-F^i}{L^i-M^i}=0\) and therefore \(L^i_\sigma=F^i_\sigma\).

Now suppose \(L^i_\sigma<F^i_\sigma\). Then in any right (random) neighbourhood of \(\sigma\) in which \(L^i<F^i\) and \(\alpha^\sigma_j\) is bounded away from 1, \(\alpha^\sigma_i\) can only be strictly positive where \(\alpha^\sigma_j=0\), i.e., the supports of \(\alpha^\sigma_i\) and \(\alpha^\sigma_j\) in these neighbourhoods must be disjoint. Hence, whenever \(\alpha^\sigma_i>0\), player \(i\) becomes leader and must prefer so over becoming follower at the next time when \(\alpha^\sigma_j>0\). Now consider neighbourhoods between \(\sigma\) and
\begin{equation*}
\sigma':=\inf\bigl\{t\geq\sigma\mid L^i_t-L^i_\sigma\geq(F^i_\sigma-L^i_\sigma)/3\text{ or }F^i_t-F^i_\sigma\leq-(F^i_\sigma-L^i_\sigma)/3\bigr\}>\sigma.
\end{equation*}
At any stopping time \(\vartheta\in[\sigma,\sigma']\), \(i\) can only prefer to stop if \(\alpha^\sigma_j=0\) on \([\sigma,\sigma']\), which contradicts the hypothesis.
\end{proof}

\newpage

\end{document}